\newcommand{\bi}{\mathbf{i}}
\newcommand{\br}{\mathbf{r}}
\newcommand{\rk}{\mathbf{rk}}
\newcommand{\bs}{\mathbf{s}}
\newcommand{\bd}{\mathbf{d}}
\newcommand{\bp}{\mathbf{p}}
\newcommand{\bt}{\mathbf{t}}
\newcommand{\ba}{\mathbf{a}}
\newcommand{\be}{\mathbf{e}}
\newcommand{\bn}{\mathbf{n}}
\newcommand{\bb}{\mathbf{b}}
\newcommand{\bm}{\mathbf{m}}
\newcommand{\bx}{\mathbf{x}}
\newcommand{\by}{\mathbf{y}}
\newcommand{\bz}{\mathbf{z}}
\newcommand{\g}{\mathfrak{g}}
\newcommand{\ra}{\rightarrow}
\newcommand{\vp}{\varphi}
\newcommand{\bino}[2]{\left[\genfrac{}{}{0pt}{}{#1}{#2}\right]}
\newtheorem{theorem}{Theorem}[section]
\newtheorem{corollary}[theorem]{Corollary}
\newtheorem{lemma}[theorem]{Lemma}
\newtheorem{proposition}[theorem]{Proposition}
\newtheorem{definition}[theorem]{Definition}
\title{Supports for linear degenerations of flag varieties}
\author{Xin Fang, Markus Reineke}
\begin{document}
\begin{abstract}We determine the set of supports for the flat family of linear degenerations of flag varieties in terms of Motzkin combinatorics.\end{abstract}
\maketitle

\section{Introduction}

In recent years, the concept of supports of a projective map of complex algebraic varieties has received attention, originating from the Support Theorem of B. C. Ng\^o \cite{Ngo}. In non-technical terms, the supports of a projective map are a collection of locally closed subvarieties of the target space, whose singularities (more precisely, their local intersection cohomology) control the variation of cohomology of the fibres of the map (see Section \ref{subsec:supports} for a brief introduction, and \cite{Mig} for a detailed overview over the subject). It thus seems desirable to compute the supports for interesting classes of projective maps, for example those arising in contexts of algebraic Lie theory.

In the present paper, we provide an explicit description of the supports of a flat family called linear degenerations of flag varieties. Based on earlier work on degenerate versions of flag varieties \cite{CFR,Fei}, this family is introduced and studied in \cite{CFFFR}. Despite the rather simple idea to degenerate flag varieties by relaxation of the containment relation between the subspaces constituting the flag, it provides a wealth of new degenerations, which nevertheless share favourable geometric properties (see Section \ref{subsec:mainresult} for the precise definitions and results).

Our main result, Theorem \ref{thm:main} below, describes the supports of the family of linear degenerations explicitly in terms of Motzkin paths. The appearance of Motzkin combinatorics is quite surprising and not to be expected a priori. It shows that the set of supports is highly nontrivial in our case, but still completely controllable. This description also allows us to prove that the set of supports is ``asymptotically very small'' compared to the set of all potential supports, see Section \ref{subsec:asymp}.

The determination of the supports of this flat family is made possible by the observation (which in fact formed the starting point for the present work), that it features as a special case of the varieties and maps arising in G. Lusztig's geometric realization of quantized enveloping algebras \cite{Lus3} (all quantum groups notions will be recalled in Section \ref{sec:que}). This allows us to reduce the determination of supports to an algebraic problem, namely expanding a certain monomial in Chevalley generators of a quantized enveloping algebra into elements of the canonical \cite{Lus1}, or global crystal \cite{Kas}, basis (see Section \ref{sec:geometric}).

Although the inherent piecewise linear combinatorics of Lusztig's canonical basis is by now well studied \cite{BFZ, BZ, BZ2, CMM, L}, and ultimately led to the new research areas of crystal basis theory and cluster algebras/combinatorics, the basis itself remains rather mysterious, and our algebraic reduction of the support problem is still not readily solvable.

Instead, we use a parametrization of the canonical basis elements which is different from the one provided by the geometric picture, for which partial results on the desired expansion can be derived purely algebraically (see Section \ref{sec:expansion}). We then precisely use the knowledge on the piecewise linear combinatorics of the canonical basis, namely the ingenious multi-segment duality formula of Knight and Zelevinsky \cite{KZ, Z2} (to be reviewed in Section \ref{sec:msd}), to play off two dual pictures of the canonical basis against each other, which then leads to the exact determination of the supports, and to the natural appearance of Motzkin combinatorics (see Section \ref{subsec:proof}).

The limitations of this indirect approach, substantiated by further explicit examples, are discussed in Sections \ref{subsec:remarks} and Section \ref{sec:smallrank}.

\vskip 5pt
\noindent
\textbf{Acknowledgments}. The work of the authors is supported by the DFG projects TRR 191 ``Symplectic Structures in Geometry, Algebra and Dynamics'' and DFG-RSF ``Geometry and representation theory at the interface between Lie algebras and quivers''.

\section{Statement of the main result}

\subsection{Supports}\label{subsec:supports}

We give a brief introduction into the concept of supports of a projective morphism and recommend \cite{Mig} for a thorough review of the topic.

Let $f:X\rightarrow Y$ be a projective morphism of complex algebraic varieties, with $X$ assumed to be irreducible and smooth. We view this morphism as the family of its fibres $X_y:=f^{-1}(y)$. We are interested in the behaviour of the cohomology $\mathrm{H}^*(X_y;\mathbb{Q})$ of the fibre when $y$ varies along $Y$. This information is clearly encoded in the complex of constructible sheaves $\mathrm{R}f_*\mathbb{Q}_X\in\mathcal{D}^b(Y)$, since, for example, the cohomology of a fibre identifies with a stalk of its cohomology sheaves, $$\mathrm{H}^*(X_y;\mathbb{Q})\simeq\mathcal{H}_y^*(\mathrm{R}f_*(\mathbb{Q}_X)).$$
By the Decomposition Theorem \cite{BBD}, $\mathrm{R}f_*(\mathbb{Q}_X)$ is isomorphic to a direct sum of shifts of intersection cohomology complexes,
$$\mathrm{R}f_*\mathbb{Q}_X\simeq \bigoplus_{i=1}^n {\rm IC}(\overline{S_i},\mathcal{L}_i)[s_i]$$
for finitely many data $(S_i,\mathcal{L}_i,s_i)$ consisting of a smooth locally closed subvariety $S_i\subset Y$, a non-zero local system $\mathcal{L}_i$ on $S_i$, and an integer $s_i$.

\begin{definition} The set $\{\overline{S_i}\,|\, i=1,\ldots,n\}$ is called the set of supports of $f$.
\end{definition}

We then find a decomposition (up to shifts):
$$\mathrm{H}^*(X_y;\mathbb{Q})\simeq\bigoplus_{i=1}^n\mathcal{H}^*_y({\rm IC}(\overline{S_i},\mathcal{L}_i)),$$
thus the cohomology of $X_y$ is essentially controlled by the local intersection cohomology of the supports $\overline{S_i}$.

A point of view advocated in \cite{Ngo} is that the set of supports should be viewed as a topological invariant of the map $f$ in its own right. We refer to \cite{Mig} for a collection of general results on the set of supports, including a codimension estimate for supports due to Goresky and MacPherson, the description of supports for semismall maps, Ng\^o's support theorem for certain abelian filtrations, and results of Migliorini and Shende on supports and higher discriminant loci.

\subsection{Statement of the main result}\label{subsec:mainresult}

Fix $n\geq 1$ and denote by $V$ an $(n+1)$-dimensional complex vector space.

We define a family $\pi:\mathcal{F}\rightarrow R$ of so-called linear degenerations of the ${\rm GL}_{n+1}(\mathbb{C})$-flag variety. The base space for the family of degenerations is $R:={\rm Hom}_{\mathbb{C}}(V,V)^{n-1}$, on which the group $G={\rm GL}(V)^n$ acts via base change with finitely many orbits $\mathcal{O}(\mathbf{r})$, indexed by rank tuples $\mathbf{r}=(r_{ij})_{1\leq i\leq j\leq n}$ (see \cite{Z1}). Namely, to a point $f_*=(f_1,\ldots,f_{n-1})\in R$ we associate the rank tuple $\mathbf{r}(f)=({\rm rank}(f_{j-1}\circ\ldots\circ f_i))_{i\leq j}$. All these orbits have connected stabilizers.

Let $\mathrm{Gr}_i(V)$ be the Grassmann variety of $i$-dimensional subspaces in $V$. We define $\mathrm{Gr}(V)=\prod_{i=1}^n\mathrm{Gr}_i(V)$, and define 
$$\mathcal{F}=\{(U_*,f_*)\in\mathrm{Gr}(V)\times R\mid\, f_i(U_i)\subset U_{i+1},\, i=1,\ldots,n-1\}.$$

We have a canonical projection $p:\mathcal{F}\rightarrow{\rm Gr}(V)$ turning $\mathcal{F}$ into a homogeneous vector bundle over ${\rm Gr}(V)$, thus $\mathcal{F}$ is smooth and irreducible. The projection $\pi:\mathcal{F}\rightarrow R$ is a projective map, whose fibres are denoted by $\mathrm{Fl}^{f_*}(V):=\pi^{-1}(f_*)$ and called linear degenerate flag variety. More explicitly,
$$\mathrm{Fl}^{f_*}(V)=\{(U_1,\ldots,U_n)\in{\rm Gr}(V)\, |\, f_i(U_i)\subset U_{i+1},\, i=1,\ldots,n-1\}.$$
Note that $\mathrm{Fl}^{{\rm id}}(V)\simeq{\rm GL}_{n+1}({\mathbb{C}})/B$ is the type $A$ complete flag variety; more generally, a generic fibre of $\pi$ is isomorphic to the ${\rm GL}_{n+1}(\mathbb{C})$-complete flag variety. By $G$-equivariance, the isomorphism type of $\mathrm{Fl}^{f_*}(V)$ only depends on the rank tuple of $f_*$. Denote by $\mathbf{r}^1$ the special rank tuple given by $\mathbf{r}^1:=(n+1+i-j)_{1\leq i\leq j\leq n}$.

One of the main results of \cite{CFFFR} classifies the irreducible fibres of dimension $\dim({\rm GL}_{n+1}({\mathbb{C}})/B)$:

\begin{theorem}[\cite{CFFFR}]
The following statements are equivalent:
\begin{enumerate}
\item the linear degenerate flag variety $\mathrm{Fl}^{f_*}(V)$ is irreducible of dimension $n(n+1)/2$;
\item $\mathbf{r}(f_*)\geq\mathbf{r}^1$ componentwise;
\item for any $i=1,\cdots,n-1$, we have $r_{i,i+1}\in\{n,n+1\}$.
\end{enumerate}
If this is the case, $\mathrm{Fl}^{f_*}(V)$ is normal, locally a complete intersection, prehomogeneous, and admits an affine paving.
\end{theorem}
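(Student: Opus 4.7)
The plan is to interpret $\mathrm{Fl}^{f_*}(V)$ as a quiver Grassmannian: the tuple $f_* = (f_1, \ldots, f_{n-1})$ makes $M = (V, V, \ldots, V; f_1, \ldots, f_{n-1})$ into a representation of the equioriented $A_n$ quiver, and $\mathrm{Fl}^{f_*}(V)$ parametrizes subrepresentations of dimension vector $(1, 2, \ldots, n)$. Under this dictionary, the rank tuple $\mathbf{r}(f_*)$ determines the isomorphism class of $M$, and the bound $\mathbf{r}(f_*) \geq \mathbf{r}^1$ translates into a constraint on the multiplicities of the indecomposable summands, which are well understood for type $A$.

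The equivalence (2) $\Leftrightarrow$ (3) is elementary rank arithmetic. Since $r^1_{ij} = (n+1) - (j-i)$, condition (3) is precisely the case $j = i+1$ of (2). Conversely, if ${\rm rank}(f_i) \geq n$ for every $i$, then each $f_i$ has kernel of dimension at most one, so $\ker(f_{j-1} \circ \cdots \circ f_i)$ has dimension at most $j - i$, giving ${\rm rank}(f_{j-1} \circ \cdots \circ f_i) \geq (n+1) - (j-i) = r^1_{ij}$. Hence (3) $\Rightarrow$ (2).

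For the main equivalence with (1), I would argue both directions using the quiver-theoretic structure. Upper semicontinuity of fibre dimension, applied to the projective flat family $\pi$ whose generic fibre is the complete flag variety, gives $\dim \mathrm{Fl}^{f_*}(V) \geq n(n+1)/2$ for every $f_*$, with equality generically. For the direction (3) $\Rightarrow$ (1), when each $f_i$ has corank at most one, I would exhibit inside $\mathrm{Fl}^{f_*}(V)$ a dense open subset realized as the orbit of a suitable subgroup of the stabilizer of $f_*$ in $G$, equivalently, an explicit birational parametrization by an affine cell of dimension $n(n+1)/2$. This yields irreducibility, the correct dimension, and prehomogeneity in one stroke. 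For the converse, I would argue by contrapositive: if some ${\rm rank}(f_i) \leq n-1$, one can split $\mathrm{Fl}^{f_*}(V)$ according to the dimension of $U_i \cap \ker(f_i)$, producing either extra irreducible components or a subfamily of strictly larger dimension because of the extra freedom to choose $U_{i+1}$ over the image $f_i(U_i)$.

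The remaining structural properties would be deduced by refining this decomposition. Normality and the local complete intersection property follow from a local computation of the defining equations of $\mathcal{F}$ inside $\mathrm{Gr}(V) \times R$ together with dimension counting, since the fibre already has the expected dimension $n(n+1)/2$. The affine paving would come from a one-parameter subgroup of a maximal torus of $G$ whose fixed locus on $\mathrm{Fl}^{f_*}(V)$ consists of isolated coordinate flags, combined with a Bia\l ynicki-Birula type argument carried out on the ambient smooth variety $\mathcal{F}$. The hardest step, in my view, is the irreducibility claim in full generality for every $\mathbf{r} \geq \mathbf{r}^1$: the explicit cell structure and its birational parametrization become progressively more intricate as more $f_i$ degenerate, and the argument is most naturally organized through the Auslander--Reiten decomposition of $M$ into indecomposable summands together with a careful count of how each summand contributes to the compatible flags.
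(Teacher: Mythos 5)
The theorem you are asked about is quoted in the paper as a result of \cite{CFFFR}; the paper does not reprove it, so there is no internal argument to compare against, only the source. Your overall framework---reading $\mathrm{Fl}^{f_*}(V)$ as the quiver Grassmannian $\mathrm{Gr}_{(1,2,\ldots,n)}(M)$ for the representation $M$ of the equioriented $A_n$ quiver determined by $f_*$, and organizing everything via the rank tuple and the decomposition of $M$ into indecomposables---is exactly the framework of \cite{CFFFR} and its precursor \cite{CFR}, so you have the right setting. Your argument for $(2)\Leftrightarrow(3)$ is correct and complete: $r^1_{ij}=(n+1)-(j-i)$, so $(3)$ is the $j=i+1$ case of $(2)$, and the bound $\dim\ker(f_{j-1}\circ\cdots\circ f_i)\leq\sum_{k=i}^{j-1}\dim\ker(f_k)\leq j-i$ gives the converse.

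The main gap is in your sketch of $(3)\Rightarrow(1)$. Exhibiting an orbit of $\mathrm{Aut}(M)$ (equivalently, of $\mathrm{Stab}_G(f_*)$) of dimension $n(n+1)/2$ inside the fibre does \emph{not} by itself yield irreducibility: since upper semicontinuity only bounds every component from below by $n(n+1)/2$, the closure of your orbit is \emph{one} component of the correct dimension, but nothing in the argument as written rules out other components. To close this you need an additional input---for instance that the quiver Grassmannian is connected and equidimensional, or a direct argument that every subrepresentation of dimension vector $(1,\ldots,n)$ degenerates into the orbit, or a rigidity/vanishing statement of $\mathrm{Ext}^1$ type that forces smoothness at the generic point of each component and hence uniqueness of the component. (Some such $\mathrm{Ext}$-vanishing argument is what \cite{CFR,CFFFR} actually use.) Relatedly, your claim that normality ``follows from a local computation of the defining equations\ldots together with dimension counting'' gives the local complete intersection property, but normality additionally requires regularity in codimension one (Serre's criterion), which you have not addressed; in \cite{CFFFR} this is a separate point. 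The contrapositive direction and the Bia{\l}ynicki-Birula affine paving are reasonable sketches consistent with \cite{CFFFR}, but, as you acknowledge, they are sketches rather than proofs.
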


We denote by $U\subset R$ the set of all $f_*$ such that $\mathbf{r}(f_*)\geq \mathbf{r}^1$ and by $\mathcal{F}_U$ the pre-image of $U$ under $\pi$; thus $\pi:\mathcal{F}_U\rightarrow U$ is a projective map between smooth irreducible varieties which is flat with irreducible fibres.

By the Decomposition Theorem \cite{BBD} (using $G$-equivariance and connectedness of stabilizers), $\mathrm{R}\pi_*\mathbb{Q}_{\mathcal{F}_U}$ decomposes into a direct sum of shifts of  intersection cohomology complexes (with respect to trivial local systems) ${\rm IC}(\overline{\mathcal{O}(\mathbf{r})})$.

Denote by $\mathcal{M}_n$ the set of Motzkin paths from $(0,0)$ to $(n,0)$, that is, $\mathcal{M}_n$ is the set of all tuples of nonnegative integers
$$\mathbf{x}=(0=x_0,x_1,\ldots,x_{n-1},x_n=0)$$
such that
$$x_i-x_{i-1}\in\{-1,0,1\},\, i=1,\ldots,n.$$

To such a Motzkin path $\mathbf{x}\in\mathcal{M}_n$ we associate a tank tuple $\mathbf{r}(\mathbf{x})\geq\mathbf{r}^1$, where
$$\mathbf{r}(\mathbf{x})_{ij}=n+1-\max_{i\leq k\leq l\leq m\leq j}(x_{l-1}+x_l-x_{k-1}-x_m).$$

Our main result gives a complete description of the set of supports of the map $\pi$:

\begin{theorem}\label{thm:main} The intersection complex ${\rm IC}(\overline{\mathcal{O}(\mathbf{r})})$ appears (up to shift) as a direct summand of $\mathrm{R}\pi_*\mathbb{Q}_{\mathcal{F}_U}$
if and only if $\mathbf{r}$ is of the form $\mathbf{r}(\mathbf{x})$ for a Motzkin path $\mathbf{x}\in \mathcal{M}_n$.
\end{theorem}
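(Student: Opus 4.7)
The plan is to translate the support problem into an algebraic one via Lusztig's geometric realization of the negative part $U^-_v$ of the quantized enveloping algebra of $\mathfrak{sl}_{n+1}$. The representation variety $R$ for the equioriented type $A_n$ quiver is precisely Lusztig's variety, and its $G$-orbits $\mathcal{O}(\br)$ correspond to canonical basis elements $b_\br$. The map $\pi:\mathcal{F}_U\to U$ fits into Lusztig's framework, with the Grassmann-bundle structure $\mathcal{F}\to\mathrm{Gr}(V)$ dictating a specific monomial $M$ in divided powers $F_i^{(k)}$ of Chevalley generators, whose expansion into the canonical basis encodes the decomposition of $\mathrm{R}\pi_*\mathbb{Q}_{\mathcal{F}_U}$. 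Concretely, $\mathrm{IC}(\overline{\mathcal{O}(\br)})$ appears as a direct summand, up to shift, if and only if the coefficient of $b_\br$ in $M$ is nonzero. The first task is to carry out this reduction precisely and write $M$ down.

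The next step is to determine the support of this canonical basis expansion. The geometric (rank-tuple) parametrization of canonical basis elements is not tractable for direct computation, so I would instead work in the multisegment parametrization. Here, iterated application of $F_i^{(k)}$ has a transparent combinatorial description as ``adding segments'' to a multisegment, and the quantum Serre relations impose restrictions that can be tracked purely algebraically. This yields a necessary condition on the multisegments $\mathfrak{m}$ whose canonical basis elements can possibly occur in the expansion of $M$, and hence an upper bound on the set of supports.

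To transfer this condition back to the rank-tuple indexing, I would then invoke the Knight--Zelevinsky multisegment duality formula, an explicit piecewise-linear involution between the two parametrizations. Applied to the restricted class of multisegments surviving the previous step, the tropical maximum appearing in that formula should translate precisely into the expression
$$\br(\bx)_{ij}=n+1-\max_{i\leq k\leq l\leq m\leq j}(x_{l-1}+x_l-x_{k-1}-x_m)$$
associated to Motzkin paths $\bx\in\mathcal{M}_n$, yielding the ``only if'' direction. For the converse, a direct but careful calculation, now leveraging the explicit shape of the surviving multisegments, should show that $b_{\br(\bx)}$ occurs with nonzero coefficient in $M$ for every $\bx\in\mathcal{M}_n$.

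The main obstacle is precisely this matching between the algebraic bound coming from Serre-type relations on the multisegment side and the Motzkin combinatorics on the rank-tuple side: neither parametrization reveals Motzkin paths on its own, and only the interplay of the two pictures, mediated by the Knight--Zelevinsky tropical involution, produces the $\max$-formula above. Making this translation sharp, so that the necessary condition and the explicit nonvanishing of canonical basis coefficients both land on the same combinatorial set $\mathcal{M}_n$, is the crux of the argument.
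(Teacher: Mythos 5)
Your proposal tracks the paper's strategy essentially step for step: reduce via Lusztig's geometric realization to expanding the monomial $M$ into canonical basis elements; bound the possible canonical basis elements by a PBW computation in the $\bi_-$ parametrization, where iterated divided powers produce only multisegments with segments of length at most two; transfer to the rank-tuple parametrization via the Knight--Zelevinsky multisegment duality; and prove nonvanishing of the surviving coefficients for Motzkin paths. Two details are worth sharpening so the sketch closes correctly: the Motzkin condition is not extracted from the duality formula alone but from intersecting the duals $\hat{\br}(\hat{\zeta}(\bx'))$ of the surviving multisegments with the open locus $U$, i.e.\ requiring $r_{i,i+1}\geq n$, which by Proposition \ref{prop:minima} is precisely the Motzkin-path constraint on $\bx$; and the ``direct but careful calculation'' for the converse is in fact a degree estimate, showing $\deg\lambda_\by\geq\deg\lambda_\bz$ whenever $\bz>\by$ precisely when $\by$ is a Motzkin path, so that the top-degree term of the leading coefficient $\lambda_\by$ survives the M\"obius-type cancellation in $\mu_\by$ and forces $\mu_\by\neq 0$.
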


The proof of this theorem is given in Section \ref{subsec:proof}.

\subsection{An elementary example}\label{subsec:elex} Taking $V={\mathbb{C}}^3$ in the previous section and identifying ${\rm Gr}_1(V)\simeq\mathbb{P}(V)$, ${\rm Gr}_2(V)\simeq\mathbb{P}(V^*)$, we have
$$\mathcal{F}\simeq \{(v,\varphi,f)\in{\mathbb{P}}(V)\times{\mathbb{P}}(V^*)\times{\rm Hom}(V,V)\, :\, \varphi\circ f\circ v=0\}.$$
Using the ${\rm GL}_3(\mathbb{C})\times{\rm GL}_3(\mathbb{C})$-equivariance, we see that $\mathrm{Fl}^{(f)}(V)$ only depends on the rank $r\in\{0,1,2,3\}$ of $f$. Choosing coordinates and diagonalizing $f$, we arrive at the following four types of degenerations:
\begin{enumerate}
\item when the rank $r=3$,
$$\mathrm{Fl}^{(3)}(V)\simeq\{((x_0:x_1:x_2),(y_0:y_1:y_2))\in\mathbb{P}^2\times\mathbb{P}^2\, :\, x_0y_0+x_1y_1+x_2y_2=0\},$$
is the ${\rm GL}_3(\mathbb{C})$-flag variety, thus an irreducible three-dimensional smooth projective variety;
\item when the rank $r=2$,
$$\mathrm{Fl}^{(2)}(V)\simeq\{((x_0:x_1:x_2),(y_0:y_1:y_2))\in\mathbb{P}^2\times\mathbb{P}^2\, :\, x_0y_0+x_1y_1=0\},$$
is an irreducible normal three-dimensional projective variety with an isolated singularity;
\item when the rank $r=1$,
$$\mathrm{Fl}^{(1)}(V)\simeq\{((x_0:x_1:x_2),(y_0:y_1:y_2))\in\mathbb{P}^2\times\mathbb{P}^2\, :\, x_0y_0=0\},$$
is isomorphic to $(\mathbb{P}^2\times\mathbb{P}^1)\vee_{\mathbb{P}^1\times\mathbb{P}^1}(\mathbb{P}^2\times\mathbb{P}^1)$, thus a three-dimensional reducible non-normal projective variety with two irreducible components intersecting in codimension one;
\item when the rank $r=0$,
$$\mathrm{Fl}^{(0)}(V)\simeq\mathbb{P}^2\times\mathbb{P}^2,$$
is four-dimensional.
\end{enumerate}

Denoting by $\mathcal{O}(r)\subset{\rm End}(V)$ the locus of linear maps of rank $r$, we see that $\pi:\mathcal{F}\ra\mathrm{End}(V)$ is flat over $\mathcal{O}(3)\cup\mathcal{O}(2)\cup\mathcal{O}(1)$, and is flat with irreducible fibres over $\mathcal{O}(3)\cup\mathcal{O}(2)$.\\[1ex]
Borrowing a result from Section \ref{subsec:rank2}, we have
$$\mathrm{R}\pi_*\mathbb{Q}_\mathcal{F}\simeq{\rm IC}(\overline{\mathcal{O}(3)})\otimes \mathrm{H}^*({\rm GL}_3(\mathbb{C})/B)\oplus{\rm IC}(\overline{\mathcal{O}(2)}).$$

\section{Quantum groups and canonical bases}\label{sec:que}

Let $A$ be the Cartan matrix of the Lie algebra $\g=\mathfrak{sl}_{n+1}$ over $\mathbb{C}$, and let $\g=\mathfrak{n}^-\oplus\mathfrak{h}\oplus\mathfrak{n}^+$ be the triangular decomposition. Let $\alpha_1,\cdots,\alpha_n$ be the corresponding simple roots and 
$$\Delta_+=\{\alpha_{i,j}=\alpha_i+\cdots+\alpha_j\mid 1\leq i\leq j\leq n\}$$
be the set of positive roots of $\g$. We define $N:=\#\Delta_+={n(n+1)}/{2}$.

Let $w_0$ be the longest element in $\mathfrak{S}_{n+1}$, the Weyl group of $\g$, and let $R(w_0)$ be the set of all reduced decompositions of $w_0$.

Let $\bi=(i_1,\cdots,i_N)\in R(w_0)$ be such a reduced decomposition of $w_0$. Then the set $\{\beta_1,\beta_2,\cdots,\beta_N\}$, where $\beta_k=s_{i_1}\cdots s_{i_{k-1}}(\alpha_{i_k})$, coincides with $\Delta_+$. This induces a convex ordering on $\Delta_+$ by letting
$$\beta_1<\beta_2<\cdots<\beta_N.$$

For a variable $v$, let $U_v(\g)$ be the quantized enveloping algebra associated to $\g$ over $\mathbb{Q}(v)$, with generators $E_i$, $F_i$ and $K_i^{\pm 1}$ for $i=1,\cdots,n$. Let $U_v^+(\g)$ be the positive part of $U_v(\g)$: it is the $\mathbb{Q}(v)$-subalgebra of $U_v(\g)$ generated by the Chevalley generators $E_1,E_2,\cdots,E_n$, which is isomorphic to the free associative algebra in $E_1,\cdots,E_n$ subject to the quantum Serre relations: for $|i-j|>1$, the generators $E_i$ and $E_j$ commute, and for $|i-j|=1$, we have
$$E_i^2E_j-(v+v^{-1})E_iE_jE_i+E_jE_i^2=0.$$

We fix the following notation: for $k\in\mathbb{N}$ and $0\leq m\leq k$, we define quantum numbers, quantum factorials and quantum binomial coefficients by
$$[k]_v:=\frac{v^k-v^{-k}}{v-v^{-1}},\ \ [k]_v!=[k]_v[k-1]_v\cdots[1]_v,\ \ \bino{k}{m}_v=\frac{[k]_v!}{[m]_v![k-m]_v!},$$
as well as the divided powers of the Chevalley generators:
$$E_i^{(r)}=\frac{E_i^r}{[r]_v!}.$$
Let $U_v^+$ denote the $\mathbb{Z}[v,v^{-1}]$-subalgebra of $U_v^+(\g)$ generated by the divided powers $E_i^{(r)}$ for $i=1,\cdots,n$ and $r\in\mathbb{N}$.

Let $\overline{\cdot}:U_v^+(\g)\ra U_v^+(\g)$ be the bar involution: it is the $\mathbb{Q}$-algebra map on $U_v^+(\g)$ defined by
$$v\mapsto v^{-1},\ \ E_i\mapsto E_i\ \ \text{for }i=1,\cdots,n.$$

\subsection{PBW basis of $U_v^+(\g)$}

For any $i=1,\cdots,n$, Lusztig defined on $U_v(\g)$ an algebra automorphism $T_i:U_v(\g)\ra U_v(\g)$ (see for example \cite[Chapter 37]{Lus2}).

We fix a reduced decomposition $\bi=(i_1,\cdots,i_N)$. For $k=1,\cdots,N$ and $m\in\mathbb{N}$, define 
$$E_{\beta_k}^{(m)}:=T_{i_1}T_{i_2}\cdots T_{i_{k-1}}(E_{i_k}^{(m)})$$
which plays the role of a divided power of a PBW root vector associated to $\beta_k$. 

For $\mathbf{m}=(m_1,m_2,\cdots,m_N)\in\mathbb{N}^N$, we denote
$$E_\bi^{(\mathbf{m})}:=E_{\beta_1}^{(m_1)}E_{\beta_2}^{(m_2)}\cdots E_{\beta_N}^{(m_N)}.$$

Then the set 
$$\{E_\bi^{(\mathbf{m})}\mid \mathbf{m}\in\mathbb{N}^N\}$$
forms a $\mathbb{Z}[v,v^{-1}]$-basis of $U_v^+$ (\cite[Proposition 2.3]{Lus1}).

\subsection{Canonical basis of $U_v^+$}
We fix on $\mathbb{N}^N$ the the following orderings: for $\ba=(a_1,\cdots,a_N)$, $\bb=(b_1,\cdots,b_N)\in\mathbb{N}^N$,
\begin{enumerate}
\item lexicographic type ordering $>_L$: $\ba>_L\bb$ if there exists $1\leq i\leq N$ such that $a_1=b_1$, $\cdots$, $a_{i-1}=b_{i-1}$ and $a_i<b_i$;
\item lexicographic type ordering $>_R$: $\ba>_R\bb$ if there exists $1\leq i\leq N$ such that $a_N=b_N$, $\cdots$, $a_{i+1}=b_{i+1}$ and $a_i<b_i$;
\item a partial order $\succ$: $\ba\succ\bb$ if both $\ba>_L\bb$ and $\ba>_R\bb$ hold.
\end{enumerate}

There is another basis of $U_v^+$, whose existence is guaranteed by the following theorem.

\begin{theorem}[\cite{Lus2}]\label{Thm:canonical}
Let $\bi\in R(w_0)$.
\begin{enumerate}
\item For any $\bn\in\mathbb{N}^N$, there exists a unique element $b_\bi^{(\bn)}\in U_v^+$ satisfying the following properties:
\begin{itemize}
\item $b_\bi^{(\bn)}$ is bar-invariant: $\overline{b_\bi^{(\bn)}}=b_\bi^{(\bn)}$;
\item the following upper-triangularity property holds:
$$b_\bi^{(\bn)}-E_\bi^{(\bn)}\in\sum_{\bm\prec\bn}v^{-1}\mathbb{Z}[v^{-1}]E_\bi^{(\bm)}.$$
\end{itemize}
\item The map $\vp_\bi$ sending $\bn$ to $b_\bi^{(\bn)}$ is a bijection between $\mathbb{N}^N$ and a basis $\mathcal{B}$ of $U_v^+$; the basis $\mathcal{B}$ does not depend on the choice of $\bi\in R(w_0)$.
\end{enumerate}
\end{theorem}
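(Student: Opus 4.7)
My plan follows the classical Kazhdan--Lusztig-style construction, adapted to the PBW setting on $U_v^+$. Uniqueness in (1) is formal: if $b$ and $b'$ both satisfy the two conditions, their difference lies in $\sum_{\bm\prec\bn}v^{-1}\mathbb{Z}[v^{-1}]E_\bi^{(\bm)}$ and is bar-invariant; given bar-triangularity of the PBW basis (next step), a descending induction on $\prec$ forces each coefficient into $v^{-1}\mathbb{Z}[v^{-1}]\cap v\mathbb{Z}[v]=\{0\}$. Existence then proceeds by the standard recursion: starting with $E_\bi^{(\bn)}$ and expanding $\overline{E_\bi^{(\bn)}}$ in the PBW basis, one successively subtracts $\mathbb{Z}[v^{-1}]$-multiples of lower PBW basis elements to kill the discrepancy between a running approximation and its bar-image, walking down the well-founded poset $(\mathbb{N}^N,\prec)$ restricted to fixed total weight. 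The resulting unitriangular change of basis immediately gives that $\vp_\bi$ bijects $\mathbb{N}^N$ onto a basis of $U_v^+$.

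The crux is therefore the bar-triangularity
\[
\overline{E_\bi^{(\bn)}}-E_\bi^{(\bn)}\in\sum_{\bm\prec\bn}\mathbb{Z}[v,v^{-1}]E_\bi^{(\bm)}.
\]
I would establish this in two stages. First, for a single PBW root vector $E_{\beta_k}=T_{i_1}\cdots T_{i_{k-1}}(E_{i_k})$, bar acts through a controllable intertwining with the $T_i$, so $\overline{E_{\beta_k}}-E_{\beta_k}$ is expressible in mixed products of the $E_\beta$ with $\beta\neq\beta_k$. Second, to control arbitrary monomials, I would invoke the Levendorskii--Soibelman straightening relations: for $\beta_k<\beta_l$ in the convex order induced by $\bi$, the quasi-commutator $E_{\beta_l}E_{\beta_k}-v^{(\beta_l,\beta_k)}E_{\beta_k}E_{\beta_l}$ is supported on PBW monomials whose root support lies strictly between $\beta_k$ and $\beta_l$. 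Iterating these commutation relations to move over-indexed factors rightward in $\overline{E_\bi^{(\bn)}}$ produces an expansion triangular with respect to $>_L$; applying the same machinery after passing to the reversed reduced word yields $>_R$-triangularity, and the two combined give the desired control by $\prec$.

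For part (2), I would invoke Matsumoto's theorem: any two reduced decompositions of $w_0$ are connected by a sequence of braid and commutation moves. A commutation move $s_is_j=s_js_i$ with $|i-j|\geq 2$ merely permutes two commuting PBW generators and preserves both the basis and the triangularity. A braid move $s_is_{i+1}s_i=s_{i+1}s_is_{i+1}$ affects only three consecutive positions in $\bi$; conjugating by the stable prefix of $T$-operators localizes the comparison to the rank-two subalgebra $U_v^+(\mathfrak{sl}_3)$, where one writes down both PBW bases in the three root vectors attached to $\alpha_i$, $\alpha_i+\alpha_{i+1}$, $\alpha_{i+1}$, verifies by a finite explicit computation that the bar-invariant upper-triangular lifts coincide, and concludes via the uniqueness of part (1).

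The main obstacle I foresee is calibrating bar-triangularity sharply against the partial order $\prec$ rather than either lex order alone: the Levendorskii--Soibelman formula naturally controls one lex order at a time, and merging the two expansions into a single $\prec$-triangular statement while keeping coefficients in $\mathbb{Z}[v,v^{-1}]$ is the delicate technical heart of the argument. Any slack here would break the uniqueness in part (1), and with it the entire Kazhdan--Lusztig recursion and the subsequent rank-two braid-move verification.
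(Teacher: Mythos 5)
The paper gives no proof of this statement: it is quoted from Lusztig's book \cite{Lus2} (see also \cite{Lus1}), and the text only uses it. So your proposal can only be measured against Lusztig's own construction, whose outline you do reproduce correctly: uniqueness and existence by the standard bar-invariance recursion (``Lusztig's Lemma''), triangularity of the bar involution on PBW monomials, and independence of the reduced word via Matsumoto's theorem together with rank-two checks.

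Two steps, however, are genuinely incomplete. First, the $\prec$-triangularity (simultaneously for $>_L$ and $>_R$) is exactly the nontrivial content of the cited theorem, and your sketch defers it: the Levendorskii--Soibelman relations control one lexicographic order, and your passage ``to the reversed reduced word'' for the other requires the algebra anti-automorphism fixing the $E_i$, which commutes with the bar involution and carries the PBW basis of $\bi$ to that of the reversed word only up to powers of $v$ and a relabeling of the roots by $-w_0$; as written this is an assertion rather than an argument, and you flag it yourself as the delicate heart. Second, in part (2) the reduction of a braid move to $U_v^+(\mathfrak{sl}_3)$ by ``conjugating by the stable prefix of $T$-operators'' does not transport bar-invariance, because the braid operators $T_i$ do not commute with the bar involution; comparing ``bar-invariant upper-triangular lifts'' inside the image of the rank-two subalgebra is therefore not a well-posed step. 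The standard argument avoids this: the rank-two computation is used only to show that the transition matrix between the two PBW bases has entries in $\mathbb{Z}[v^{-1}]$ and reduces to a permutation matrix modulo $v^{-1}$ --- a statement in which the bar involution does not appear --- whence the $\mathbb{Z}[v^{-1}]$-lattice $\mathcal{L}$ spanned by the PBW basis and the induced basis of $\mathcal{L}/v^{-1}\mathcal{L}$ are independent of $\bi$; the independence of $\mathcal{B}$ then follows from the intrinsic characterization of $b_\bi^{(\bn)}$ as the unique bar-invariant element of $\mathcal{L}$ with prescribed image modulo $v^{-1}\mathcal{L}$, not from the uniqueness statement of part (1) applied inside a $T$-translated subalgebra.
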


This basis $\mathcal{B}$, defined by Lusztig in \cite{Lus1}, is called the canonical basis. This basis was later shown to be the same as the global crystal basis of Kashiwara \cite{Kas}. The map $\vp_\bi:\mathbb{N}^N\ra\mathcal{B}$ is called the Lusztig parametrisation of the canonical basis $\mathcal{B}$ with respect to the reduced decomposition $\bi$, see also \cite{BFZ, BZ2, Cal, CM}.

\subsection{More on the transition matrix}
We fix a reduced decomposition $\bi\in R(w_0)$.

For $\bn\succcurlyeq\bm$, let $\zeta^\bn_\bm\in\mathbb{Z}[v^{-1}]$ and $w^\bn_\bm\in\mathbb{Z}[v,v^{-1}]$ be the base change coefficients given by:

\begin{equation}\label{Eq:canonicalPBW}
b_\bi^{(\bn)}=\sum_{\bm\preccurlyeq\bn}\zeta^\bn_\bm E_\bi^{(\bm)}\ \ \text{and}\ \ \overline{E_\bi^{(\bn)}}=\sum_{\bm\preccurlyeq\bn}w^\bn_\bm E_\bi^{(\bm)}.
\end{equation}
In fact, $\zeta^\bn_\bn=w^\bn_\bn=1$, and for $\bm\prec\bn$, $\zeta^{\bn}_{\bm}\in v^{-1}\mathbb{Z}[v^{-1}]$.

Since the basis elements $b_\bi^{(\bn)}$ are bar-invariant, we have
$$\overline{E_\bi^{(\bn)}}=E_
\bi^{(\bn)}+\sum_{\bm\prec\bn}\left(\zeta^\bn_\bm E_\bi^{(\bm)}-\overline{\zeta^\bn_\bm E_\bi^{(\bm)}}\right).$$
Comparing this with the formula \ref{Eq:canonicalPBW} of the bar-involution, we find: for $\bs\prec\bn$,
\begin{equation}\label{Eq:barcoefficient}
w^\bn_\bs=\zeta^\bn_\bs-\overline{\zeta^\bn_\bs}-\sum_{\bm:\bs\prec\bm\prec\bn}\overline{\zeta^\bn_\bm}w^\bm_\bs.
\end{equation}

According to \cite[Section 9.11]{Lus1}, one can solve the coefficients $\zeta^\bn_\bs$ from the Equation \eqref{Eq:barcoefficient}.

\section{Geometric realisation of quantum groups and reduction of the theorem}\label{sec:geometric}

We consider the reduced decomposition 
$$\bi_+:=(n,n-1,n,n-2,n-1,n,\cdots,1,2,\cdots,n)\in R(w_0),$$ 
for which the induced ordering on the positive roots is
$$\alpha_{n,n},\alpha_{n-1,n},\alpha_{n-1,n-1},\alpha_{n-2,n},\alpha_{n-2,n-1},\alpha_{n-2,n-2},\cdots,\alpha_{1,n},\cdots,\alpha_{1,1}.$$
For a rank tuple $\mathbf{r}$, we define for all $1\leq i\leq j\leq n$
$$m_{i,j}=r_{i,j}-r_{i-1,j}-r_{i,j+1}+r_{i-1,j+1}$$ 
(if $i=0$ or $j=n+1$ we set $r_{i,j}=0$), which we enumerate according to the ordering on positive roots:
$$\mathbf{m}(\mathbf{r})=(m_{n,n},m_{n-1,n},m_{n-1,n-1},\cdots,m_{1,n},\cdots,m_{1,1}),$$
and we associate the corresponding PBW type basis element, resp.~canonical basis element
$$E_+(\mathbf{r})=E_{\bi_+}^{(\mathbf{m}(\mathbf{r}))},\; b_+(\mathbf{r})=b_{\bi_+}^{(\mathbf{m}(\mathbf{r}))}.$$
The monomial
$$M=E_1^{(n)}E_2^{(n-1)}\cdots E_{n-1}^{(2)}E_nE_1E_2^{(2)}\cdots E_{n-1}^{(n-1)}E_n^{(n)}$$
can be expanded into a $\mathbb{Z}[v,v^{-1}]$-linear combination of elements in $\mathcal{B}$.

\begin{proposition} An orbit closure $\overline{\mathcal{O}(\mathbf{r})}\subset U$ is a support of the projective map $\pi:\mathcal{F}_U\rightarrow U$ if and only if the canonical basis element $b_+(\mathbf{r})$ appears with non-zero coefficient in the expansion of $M$.
\end{proposition}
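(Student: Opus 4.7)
The plan is to apply Lusztig's geometric realisation of the positive part $U_v^+$ of the quantized enveloping algebra $U_v(\mathfrak{sl}_{n+1})$, under which monomials in the Chevalley generators are categorified by pushforwards of constant sheaves along projective maps from varieties of quiver flags to the representation variety. In this framework the simple perverse summands of such a pushforward are IC sheaves of $G$-orbit closures (with trivial local systems in type $A$), and they correspond bijectively, via Lusztig's parametrisation, to the canonical basis elements of $U_v^+$ appearing in the expansion of the given monomial. The goal is therefore to recognise $\pi:\mathcal{F}_U\to U$ as (the relevant restriction of) Lusztig's flag-correspondence map attached to the monomial $M$.

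To carry this out, I would first identify $R$ with the representation variety of the equioriented type $A_n$ quiver on dimension vector $(n+1,\ldots,n+1)$ and $G$ with its base-change group. Under this identification the orbit $\mathcal{O}(\mathbf{r})$ is the isomorphism class of the quiver representation whose multiplicity of the indecomposable $M_{[i,j]}$ equals $m_{i,j}=r_{i,j}-r_{i-1,j}-r_{i,j+1}+r_{i-1,j+1}$, by the classical Auslander--Reiten/Zelevinsky parametrisation; this is already encoded in the definition of $\mathbf{m}(\mathbf{r})$. The condition $f_i(U_i)\subset U_{i+1}$ with $\dim U_i=i$ is exactly the condition that $(U_1,\ldots,U_n)$ be a subrepresentation of dimension vector $(1,2,\ldots,n)$, so $\mathcal{F}$ is the relative quiver Grassmannian over $R$ attached to this dimension vector, and the complementary subquotient has dimension vector $(n,n-1,\ldots,1)$. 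The reduced word $\bi_+$ prescribes the order in which the simple composition factors are extracted from the two graded pieces of the filtration $0\subset U\subset V$, and this matches precisely the order of factors in the monomial $M$; the successive refinements of this two-step filtration into the complete simple filtration required by Lusztig's construction are classical flag-variety bundles, hence smooth and proper, so integrating them out does not change the set of IC summands. Thus $\pi:\mathcal{F}_U\to U$ and the Lusztig flag-correspondence for $M$ share the same decomposition up to multiplicities.

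Granting this identification, the Decomposition Theorem applied to $\mathrm{R}\pi_*\mathbb{Q}_{\mathcal{F}_U}$, together with $G$-equivariance and the connectedness of stabilisers (ruling out non-trivial local systems), exhibits the pushforward as a direct sum of shifts of $\mathrm{IC}(\overline{\mathcal{O}(\mathbf{r})})$. On the algebraic side, Lusztig's categorification identifies the multiplicities of these IC summands with the coefficients of the expansion of $M$ in the canonical basis $\mathcal{B}$, under the Lusztig parametrisation $\vp_{\bi_+}$. Since $b_+(\mathbf{r})=b_{\bi_+}^{(\mathbf{m}(\mathbf{r}))}$ is exactly the canonical basis element indexed by the indecomposable-multiplicity vector of $\mathcal{O}(\mathbf{r})$, we obtain that $\overline{\mathcal{O}(\mathbf{r})}$ is a support of $\pi$ if and only if $b_+(\mathbf{r})$ appears with non-zero coefficient in the expansion of $M$.

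The main obstacle is precisely the geometric identification in the second step: one must produce the monomial $M$, with the specific factor order dictated by $\bi_+$, from the two-step subrepresentation datum of $\mathcal{F}$, and verify that the forgetful maps from Lusztig's iterated simple-composition flag variety down to $\mathcal{F}_U$ are smooth proper bundles, so that no spurious supports are introduced and none are lost; one also has to keep track of the Lusztig parametrisation carefully to match $\vp_{\bi_+}$ with the combinatorics of $\mathbf{m}(\mathbf{r})$. Once this bookkeeping is in place, the proposition is a formal consequence of Lusztig's categorification and the Decomposition Theorem.
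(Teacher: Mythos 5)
Your proposal takes essentially the same route as the paper: identify $\pi:\mathcal{F}_U\to U$ with a restriction of Lusztig's flag-correspondence $\pi_{\mathbf{i},\mathbf{a}}$ over the representation space of the equioriented $A_n$ quiver, then invoke Lusztig's categorification (monomial $\leftrightarrow$ pushforward, canonical basis element $\leftrightarrow$ $\mathrm{IC}$ of an orbit closure) together with the Decomposition Theorem and $G$-equivariance. Two imprecisions are worth flagging. First, you conflate the reduced word $\bi_+=(n,n-1,n,\ldots,1,2,\ldots,n)$ (which is adapted to $\Omega$ and used only to parametrise the canonical basis via $\vp_{\bi_+}$, so that $b_+(\mathbf{r})$ corresponds to $\mathrm{IC}(\overline{\mathcal{O}(\mathbf{r})})$) with the flag-variety sequence $\mathbf{i}=(1,2,\ldots,n,1,2,\ldots,n)$, $\mathbf{a}=(n,\ldots,1,1,\ldots,n)$ that indexes Lusztig's variety; the subscripts of $M$ match $\mathbf{i}$, not $\bi_+$. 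Second, Lusztig's construction already incorporates the multiplicity vector $\mathbf{a}$ and produces the divided-power monomial directly, so $\widetilde{\mathcal{F}}_{\mathbf{i},\mathbf{a}}$ is already isomorphic to $\mathcal{F}$ (the intermediate flags $V^k$ are uniquely determined by $U_\bullet$) and there is no need for the detour through complete rank-one filtrations followed by integration over flag bundles; that detour is harmless for the support set but introduces avoidable bookkeeping.
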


\begin{proof} We use the geometric realisation of $U_v^+(\g)$ developed in \cite{Lus1,Lus3}. Let $\Omega$ be the quiver
$$1\rightarrow 2\rightarrow\cdots\rightarrow n.$$
By \cite[10.17.]{Lus3}, there exists an isomorphism of $\mathbb{Q}(v)$-algebras
$$\lambda_\Omega: U_v^+(\g)\rightarrow\mathcal{K}_\Omega\otimes\mathbb{Q}(v),$$
between $U_v^+(\g)$ and (a scalar extension of) the Grothendieck group of a certain category of perverse sheaves on representation spaces of the quiver $\Omega$ (with an algebra structure given by a certain convolution construction). By \cite[0.3]{Lus3}, in our case of a quiver of Dynkin type $A$, this is just the category of direct sums of shifts of intersection cohomology complexes of orbit closures on these representation spaces; the isomorphism $\lambda_\Omega$ is the inverse of the isomorphism $\mathbf{\Theta}$ of \cite[Proposition 9.8.]{Lus1}.

More precisely, we can translate the geometric setup of Section \ref{subsec:mainresult} to the notation of \cite{Lus3} as follows:

We consider the graded vector space $\mathbf{V}=\bigoplus_{i=1}^nV$. Then, in the notation of \cite[1.2.]{Lus3}, the base space $R$ is $\mathbf{E}_{\mathbf{V}}$, and the group action of $G$ on $R$ coincides with the action of $G_\mathbf{V}$ on $\mathbf{E}_\mathbf{V}$ there. The orbit $\mathcal{O}(\mathbf{r})$ equals $\mathcal{O}_{\mathbf{m}(\mathbf{r})}$ in the notation of \cite[4.15., 4.16.]{Lus1}.

We consider the sequences $$\mathbf{i}=(1,2,\cdots,n,1,2,\cdots,n),\; \mathbf{a}=(n,n-1,\cdots,1,1,2,\dots,n).$$ Working through the definitions of \cite[1.4., 1.5.]{Lus3}, we see that the map $\pi_{\mathbf{i},\mathbf{a}}:\widetilde{\mathcal{F}}_{\mathbf{i},\mathbf{a}}\rightarrow\mathbf{E}_\mathbf{V}$ there is precisely our family $\pi:\mathcal{F}\rightarrow R$ of linear degenerations.

By \cite[Prop. 10.13.]{Lus3}, the isomorphism $\lambda_\Omega$ maps the monomial $M$ to a shift of the complex $\mathrm{R}\pi_*\mathbb{Q}_\mathcal{F}$ on $R$. On the other hand, $\lambda_\Omega$ maps $b_+(\mathbf{r})$ to a shift of the complex ${\rm IC}(\overline{\mathcal{O}(\mathbf{r})})$ by \cite[9.4., Theorem 9.13.]{Lus1}. 

We conclude that ${\rm IC}(\overline{\mathcal{O}(\mathbf{r})})$ appears as a direct summand of $\mathrm{R}\pi_*\mathbb{Q}_\mathcal{F}$ on $R$ if and only if $b_+(\mathbf{r})$ appears with non-zero coefficient in the expansion of $M$ in the canonical basis. Restricting to the open $G$-invariant subset $U\subset R$ yields the desired statement.\end{proof}

\section{Multi-segment duality}\label{sec:msd}

\subsection{Notation}
We fix a reduced decomposition $\bi\in R(w_0)$ and identify $\bm=(m_1,m_2,\cdots,m_N)\in\mathbb{N}^N$ with the PBW basis element $E_{\bi}^{(\bm)}$.

For $1\leq i\leq j\leq n$ we denote $\be_{i,j}$ the coordinate of $\mathbb{N}^N$ corresponding to $E_{\alpha_{i,j}}$.

For $\bm\in\mathbb{N}^N$, we write 
$$\bm=\sum_{1\leq i\leq j\leq n}m_{i,j}\be_{i,j},$$
which is called a multi-segment in \cite{KZ}.

We define a multi-segment $(\ba,\bx)\in\mathbb{N}^n\times\mathbb{N}^{n-1}$, for $\ba=(a_1,\cdots,a_n)$ and $\bx=(x_1,\cdots,x_{n-1})$ in $\mathbb{N}^N$, by:
$$(\ba,\bx):=\sum_{1\leq i\leq n}a_i\be_{i,i}+\sum_{1\leq j\leq n-1}x_j\be_{j,j+1}.$$

We will use the convention $a_0=a_{n+1}=x_0=x_n=0$.

\subsection{Multi-segment duality}

Let $\bi_-\in R(w_0)$ be the reduced decomposition
$$\bi_-:=(1,2,1,3,2,1,\cdots,n,n-1,\cdots,1).$$

The multi-segment duality map is by definition the piecewise linear map $\zeta:\mathbb{N}^N\ra\mathbb{N}^N$ defined by:
$$\zeta:=\vp_{\bi_+}^{-1}\circ\vp_{\bi_-}.$$

For a multi-segment $\bm=\sum_{1\leq i\leq j\leq n}m_{i,j}\be_{i,j}\in\mathbb{N}^N$, we define a rank tuple $(r_{i,j})_{i\leq j}$ by
$$r_{i,j}(\bm):=\sum_{[i,j]\subset [k,\ell]}m_{k,\ell}.$$

As in the previous section, the multi-segment $\bm$ can be recovered from the rank tuple by
$$m_{i,j}=r_{i,j}(\bm)-r_{i-1,j}(\bm)-r_{i,j+1}(\bm)+r_{i-1,j+1}(\bm),$$
where $r_{k,\ell}$ is formally defined as zero if $1\leq k\leq \ell\leq n$ is not fulfilled.

\subsection{Specialization of the Knight-Zelevinsky formula}

By \cite[3., Remark]{BZ}, the multi-segment duality $\zeta$ can be described using the explicit formula \cite[Theorem 1.2.]{KZ} (see also \cite{Z2}). Namely, we have

$$\hat{\br}(\zeta(\bm))=\br(\hat{\zeta}(\bm)),$$
where $\br\mapsto\hat{\br}$ is the involution on rank tuples given by $$\hat{r}_{i,j}=r_{n+1-j,n+1-i},$$ and the map $\hat{\zeta}$ on multi-segments is given by the following formula:
$$r_{i,j}(\hat{\zeta}(\mathbf{m}))=\min_\nu\sum_{(k,l)\in[1,i]\times[j,n]}m_{\nu(k,l)+k-i,\nu(k,l)+l-j},$$
where the sum ranges over all maps $\nu:[1,i]\times[j,n]\rightarrow[i,j]$ such that $\nu(k,l)\leq \nu(k',l')$ whenever $k\leq k'$ and $l\leq l'$.

This formula simplifies considerably in the case where $m_{i,j}=0$ for $j-i\geq 2$. Namely, in this case, a summand contributing to the above sum can be non-zero only if for the indices $k$ and $l$ specifying the summand, we have
$$1\geq (l-j)+(i-k),$$
that is,
$$(k,l)\in\{(i,j),(i-1,j),(i,j+1)\}.$$
Denoting
$$p=\nu(i-1,j),\; q=\nu(i,j),\; r=\nu(i,j+1),$$
the above formula thus reduces to
$$r_{i,j}(\hat{\zeta}(\mathbf{m}))=\min_{i\leq p\leq q\leq r\leq j}(m_{p-1,p}+m_{q,q}+m_{r,r+1}).$$
In particular, this applies to the following situation: given $\bx\in\mathbb{N}^{n-1}$ as above such that $x_i+x_{i+1}\leq n+1$ for all $i$, we define $a_i(\bx)=n+1-x_i-x_{i-1}$ (again using the convention $x_0=0=x_n)$, and finally define the multi-segment
$$\bx'=(\ba(\bx),\bx).$$

We then find the following formula (the special case being easily worked out):

\begin{proposition}\label{prop:minima} For $\bx$ as before, we have
$$r_{i,j}(\hat{\zeta}(\mathbf{x}'))=n+1-\max_{i\leq k\leq l\leq m\leq j}(x_{l-1}+x_l-x_{k-1}-x_m).$$
In particular, we have
$$r_{i,i+1}(\hat{\zeta}(\mathbf{x}'))=n+1-\max(0,x_i-x_{i+1},x_i-x_{i-1}).$$
\end{proposition}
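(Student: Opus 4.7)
The plan is to obtain this proposition as a direct specialization of the simplified Knight--Zelevinsky formula derived immediately above the statement. The multi-segment $\bx' = (\ba(\bx),\bx)$ has support only on segments of length one or two, i.e.\ $m_{k,\ell} = 0$ whenever $\ell - k \geq 2$, so the hypothesis under which the three-term formula
$$r_{i,j}(\hat{\zeta}(\mathbf{m})) = \min_{i \leq p \leq q \leq r \leq j}\bigl(m_{p-1,p} + m_{q,q} + m_{r,r+1}\bigr)$$
was derived is satisfied. I would begin by noting that the assumption $x_i + x_{i+1} \leq n+1$ ensures $a_i(\bx) \geq 0$, so that $\bx'$ is a genuine multi-segment in $\mathbb{N}^N$, and hence the above formula applies verbatim.

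Next I would substitute the explicit values of the entries of $\bx'$: we have $m_{p-1,p} = x_{p-1}$, $m_{r,r+1} = x_r$, and $m_{q,q} = a_q(\bx) = n+1 - x_q - x_{q-1}$. Plugging these in,
$$r_{i,j}(\hat{\zeta}(\bx')) = \min_{i \leq p \leq q \leq r \leq j}\bigl(x_{p-1} + (n+1 - x_q - x_{q-1}) + x_r\bigr) = n+1 - \max_{i \leq p \leq q \leq r \leq j}\bigl(x_{q-1} + x_q - x_{p-1} - x_r\bigr).$$
Renaming $(p,q,r) \mapsto (k,l,m)$ yields exactly the claimed formula
$$r_{i,j}(\hat{\zeta}(\bx')) = n+1 - \max_{i \leq k \leq l \leq m \leq j}\bigl(x_{l-1} + x_l - x_{k-1} - x_m\bigr).$$

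For the special case $j = i+1$, I would enumerate the four admissible triples $(k,l,m)$ with $i \leq k \leq l \leq m \leq i+1$, namely $(i,i,i)$ and $(i+1,i+1,i+1)$ contributing $0$, the triple $(i,i,i+1)$ contributing $x_i - x_{i+1}$, and the triple $(i,i+1,i+1)$ contributing $x_i - x_{i-1}$. Taking the maximum over these four values gives $\max(0, x_i - x_{i+1}, x_i - x_{i-1})$, which produces the second displayed equation.

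The proof is essentially a mechanical substitution once the simplified Knight--Zelevinsky formula is in hand; the only mild obstacle is a careful bookkeeping of the index shifts (the $p-1$ versus $p$, etc.) and the sign flip when turning a minimum into a maximum, but no genuine combinatorial difficulty arises here since all the real work was done in establishing the reduced three-term minimization formula above.
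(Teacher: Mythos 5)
Your proof is correct and follows exactly the route the paper intends (the paper simply asserts that ``the special case is easily worked out''): substitute the coefficients of $\bx'$ into the three-term minimization formula, pull out the constant $n+1$, flip the sign to turn a minimum into a maximum, and rename indices; the enumeration of the four triples for $j=i+1$ is also right. The only tiny slip is in your preliminary remark: $a_i(\bx)=n+1-x_i-x_{i-1}\geq 0$ is ensured by $x_{i-1}+x_i\leq n+1$, not $x_i+x_{i+1}\leq n+1$ (though since the hypothesis is imposed for all $i$ with the conventions $x_0=x_n=0$, the conclusion stands).
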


\section{Expansion of the monomial $M$}\label{sec:expansion}

As in Section \ref{sec:geometric}, we consider the following monomial $M\in U_v^+$:
$$M=E_1^{(n)}E_2^{(n-1)}\cdots E_{n-1}^{(2)}E_nE_1E_2^{(2)}\cdots E_{n-1}^{(n-1)}E_n^{(n)}.$$

The goal of this section is to study the expansion of the monomial $M$ into canonical basis elements with respect to the parametrization of the canonical basis induced by the reduced decomposition $\bi_-$. In this section we fix $\bi_-$ to be the reduced decomposition, and will drop this index.

\subsection{Expansion in PBW basis}

For $\beta=\alpha_{i,i+1}$, we will denote $E_{i,i+1}:=E_\beta$ for simplicity: it is given by 
$$E_{i,i+1}=E_{i+1}E_i-v^{-1}E_iE_{i+1}.$$

The following formula can be found in \cite[Section 42.1]{Lus2}: for $a,b,c\in\mathbb{N}$,

\begin{equation}\label{Eq:rank2}
E_i^{(a)}E_{i+1}^{(b)}E_i^{(c)}=\sum_{r=0}^{\min(b,c)}v^{-(b-r)(c-r)}\bino{a+c-r}{a}_vE_i^{(a+c-r)}E_{i,i+1}^{(r)}E_{i+1}^{(b-r)}.
\end{equation}

Let $e_1,e_2,\cdots,e_n,f_1,f_2,\cdots,f_n\in\mathbb{N}$. For a tuple $\bx=(x_1,x_2,\cdots,x_{n-1})\in\mathbb{N}^{n-1}$ satisfying for $i=1,\cdots,n-1$,
$$0\leq x_i\leq \min(e_i,f_{i+1}),$$
we define $E(\bx)$ to be the monomial
$$E_1^{(e_1+f_1-x_1)}E_{1,2}^{(x_1)}E_2^{(e_2+f_2-x_1-x_2)}E_{2,3}^{(x_2)}\cdots E_{n-1}^{(e_{n-1}+f_{n-1}-x_{n-2}-x_{n-1})}E_{n-1,n}^{(x_{n-1})}E_n^{(e_n+f_n-x_{n-1})}.$$

\begin{lemma}\label{Lem:expansionPBW}
We have the following identity:
$$E_1^{(f_1)}\cdots E_n^{(f_n)}E_1^{(e_1)}\cdots E_n^{(e_n)}=\sum_{\bx}v^{-\sum_{i=1}^{n-1}(e_i-x_i)(f_{i+1}-x_i)}\prod_{i=1}^n\bino{e_i+f_i-x_{i-1}-x_i}{f_i-x_{i-1}}_vE(\bx),$$
where the sum ranges over all possible tuples $\bx=(x_1,x_2,\cdots,x_{n-1})\in\mathbb{N}^{n-1}$ satisfying for any $i=1,\cdots,n-1$,
$0\leq x_i\leq \min(e_i,f_{i+1}).$
\end{lemma}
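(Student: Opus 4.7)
The plan is to derive the identity directly from two elementary tools in $U_v^+$: the quantum Serre commutativity $E_iE_j=E_jE_i$ for $|i-j|>1$, and the $a=0$ specialization of the rank-two identity \eqref{Eq:rank2},
$$E_{i+1}^{(b)}E_i^{(c)}=\sum_{r=0}^{\min(b,c)}v^{-(b-r)(c-r)}E_i^{(c-r)}E_{i,i+1}^{(r)}E_{i+1}^{(b-r)}.$$
No induction is required; the computation runs in parallel over the index $i$.

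The first step is to rearrange the LHS using only the Serre commutativity. For each $i$, the factor $E_i^{(e_i)}$ commutes with every $E_j^{(f_j)}$ with $j\geq i+2$. Sliding $E_1^{(e_1)}$ leftward past $E_3^{(f_3)},\ldots,E_n^{(f_n)}$, then $E_2^{(e_2)}$ past $E_4^{(f_4)},\ldots,E_n^{(f_n)}$, and so on, brings the product into the form
$$E_1^{(f_1)}\cdot\bigl(E_2^{(f_2)}E_1^{(e_1)}\bigr)\cdot\bigl(E_3^{(f_3)}E_2^{(e_2)}\bigr)\cdots\bigl(E_n^{(f_n)}E_{n-1}^{(e_{n-1})}\bigr)\cdot E_n^{(e_n)}.$$
Next, I would apply the rank-two identity to each bracket $E_{i+1}^{(f_{i+1})}E_i^{(e_i)}$, introducing summation variables $x_1,\ldots,x_{n-1}$ subject to $0\leq x_i\leq\min(e_i,f_{i+1})$. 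This produces a scalar $v^{-(f_{i+1}-x_i)(e_i-x_i)}$ and replaces the bracket by $E_i^{(e_i-x_i)}E_{i,i+1}^{(x_i)}E_{i+1}^{(f_{i+1}-x_i)}$. After substitution, at each index $i$ two divided powers of $E_i$ sit adjacent to one another: $E_i^{(f_i-x_{i-1})}$ immediately followed by $E_i^{(e_i-x_i)}$, with the convention $x_0=x_n=0$ covering the endpoints $i=1$ and $i=n$. These merge via the quantum divided-power relation $E_i^{(p)}E_i^{(q)}=\bino{p+q}{p}_vE_i^{(p+q)}$ into exactly the factor $\bino{e_i+f_i-x_{i-1}-x_i}{f_i-x_{i-1}}_vE_i^{(e_i+f_i-x_{i-1}-x_i)}$ appearing in $E(\bx)$.

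The only genuine obstacle is bookkeeping. One must verify that the summed $v$-exponents aggregate to $-\sum_{i=1}^{n-1}(e_i-x_i)(f_{i+1}-x_i)$ and that the boundary conventions $x_0=x_n=0$ reproduce the correct binomial coefficients $\bino{e_1+f_1-x_1}{f_1}_v$ and $\bino{e_n+f_n-x_{n-1}}{f_n-x_{n-1}}_v$ at the two ends. Both checks are immediate from the fact that the outermost factors $E_1^{(f_1)}$ and $E_n^{(e_n)}$ stay in place throughout the rearrangement, and no PBW monomial on the RHS is produced twice, since the tuple $\bx$ is read off unambiguously from the multiplicities of the root vectors $E_{i,i+1}$ in the resulting word.
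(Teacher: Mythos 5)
Your proof is correct. You have essentially unrolled the paper's induction: the paper's (very terse) argument rewrites the LHS into $E_1^{(f_1)}\cdots E_{n-1}^{(f_{n-1})}E_1^{(e_1)}\cdots E_{n-2}^{(e_{n-2})}\cdot E_n^{(f_n)}E_{n-1}^{(e_{n-1})}E_n^{(e_n)}$ using the Serre commutations, applies the rank-two identity once to peel off the $n$-th index, and invokes the inductive hypothesis. You instead perform the entire Serre reshuffling at once, obtaining the bracketed form $E_1^{(f_1)}\prod_{i=1}^{n-1}\bigl(E_{i+1}^{(f_{i+1})}E_i^{(e_i)}\bigr)E_n^{(e_n)}$, and then apply the $a=0$ specialization of the rank-two formula to every bracket simultaneously; the adjacent divided powers $E_i^{(f_i-x_{i-1})}E_i^{(e_i-x_i)}$ then merge into exactly the binomial factor and the PBW root-vector piece of $E(\bx)$. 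The two routes use the same two ingredients (distant Serre commutativity and Lusztig's rank-two identity \eqref{Eq:rank2}), so they are not genuinely different arguments, but yours has the merit of making the bookkeeping of $v$-exponents, binomials, and the boundary conventions $x_0=x_n=0$ fully explicit in one pass, with no base case or induction scaffolding needed.

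Two minor points worth stating explicitly if you write this up: (i) the merge step uses the standard identity $E_i^{(p)}E_i^{(q)}=\bino{p+q}{p}_vE_i^{(p+q)}$; and (ii) distinct tuples $\bx$ produce distinct PBW monomials because $\bx$ is recovered from the exponents of $E_{1,2},\ldots,E_{n-1,n}$ in $E(\bx)$, so the expansion is already in reduced form and no terms need to be collected across different $\bx$.
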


\begin{proof}
We prove the formula by induction on $n$. First we rewrite 
$$E_1^{(f_1)}\cdots E_n^{(f_n)}E_1^{(e_1)}\cdots E_n^{(e_n)}$$ 
into
$$E_1^{(f_1)}\cdots E_{n-1}^{(f_{n-1})}E_1^{(e_1)}\cdots E_{n-2}^{(e_{n-2})}E_n^{(f_n)}E_{n-1}^{(e_{n-1})}E_n^{(e_n)}.$$
After applying equation \eqref{Eq:rank2} to the monomial $E_n^{(f_n)}E_{n-1}^{(e_{n-1})}E_n^{(e_n)}$, the inductive hypothesis can be applied.
\end{proof}

\subsection{Expansion in canonical basis}


We consider the special case of Lemma \ref{Lem:expansionPBW} where $(e_1,e_2,\cdots,e_n)=(1,2,\cdots,n)$ and $(f_1,f_2,\cdots,f_n)=(n,n-1,\cdots,1)$.
Let $\bt_0=(t_1^0,t_2^0,\cdots,t_{n-1}^0)$ be the tuple with $t_k^0=\min(k,n-k)$.

Let $\mathcal{P}$ be the set of tuples $\by\in\mathbb{N}^{n-1}$ such that $\bt_0-\by\in\mathbb{N}^{n-1}$. We define a partial order $>$ on $\mathcal{P}$ by: $\bx\geq\by$ if $\bx-\by\in\mathbb{N}^{n-1}$. Note that $\bx\geq\by$ implies $\bx'\succcurlyeq\by'$, recalling that we associate to $\bx=(x_1,\cdots,x_{n-1})\in\mathcal{P}$ the multi-segment element $\bx'$ in $\mathbb{N}^{N}$ defined by:
$$\bx'=\sum_{k=1}^{n-1}x_k\be_{k,k+1}+\sum_{\ell=1}^n(n+1-x_\ell-x_{\ell-1})\be_{\ell,\ell},$$
where we formally set $x_0=x_n=0$. Then for $\bx=(x_1,x_2,\cdots,x_{n-1})\in\mathbb{N}^{n-1}$, we have
$$E(\bx')=E_1^{(n+1-x_1)}E_{1,2}^{(x_1)}E_2^{(n+1-x_1-x_2)}\cdots E_{n-1}^{(n+1-x_{n-2}-x_{n-1})}E_{n-1,n}^{(x_{n-1})}E_n^{(n+1-x_{n-1})}.$$


\begin{lemma}\label{Lem:w}
For $\bx,\by\in\mathcal{P}$, the coefficient $w_{\by'}^{\bx'}$ is non-zero only if $\bx\geq\by$. If this is the case, 
$$w^{\bx'}_{\by'}=v^{-\sum_{k=1}^{n-1}\frac{1}{2}(x_k-y_k)(x_k-y_k-1)}(v^{-1}-v)^{\sum_{k=1}^{n-1}(x_k-y_k)}\prod_{k=1}^{n-1}[x_k-y_k]_v!\times$$
$$\times\prod_{k=1}^n\bino{n+1-y_{k-1}-y_k}{x_k-y_k}_v\bino{n+1-x_{k}-y_{k-1}}{x_{k-1}-y_{k-1}}_v,$$
where we formally set $y_0=y_n=x_0=x_n=0$.
\end{lemma}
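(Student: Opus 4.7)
The plan is to compute $\overline{E(\bx')}$ directly by exploiting that $\overline{\cdot}$ is an algebra homomorphism fixing every divided power $E_j^{(k)}$; the entire computation therefore reduces to a factorwise analysis of $\overline{E_{i,i+1}^{(x_i)}}$ inside the rank-two subalgebra generated by $E_i$ and $E_{i+1}$. Write $Z := E_{i,i+1}$ and $Y := E_i E_{i+1}$. A direct calculation using the Serre relations yields the quasi-commutations $Z E_i = v E_i Z$ and $Z E_{i+1} = v^{-1} E_{i+1} Z$, from which $ZY=YZ$ follows at once. Since $\overline{Z} = Z + (v^{-1}-v) Y$, the ordinary binomial theorem for the commuting pair $(Z,(v^{-1}-v)Y)$ gives $\overline{Z^{(r)}} = (Z + (v^{-1}-v)Y)^r/[r]_v!$.

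The key technical input is the closed PBW form
\[
\overline{E_{i,i+1}^{(r)}} = \sum_{t=0}^{r} v^{-\binom{t}{2}} (v^{-1}-v)^{t} [t]_v!\, E_i^{(t)} E_{i,i+1}^{(r-t)} E_{i+1}^{(t)}.
\]
I would prove this by induction on $r$: multiply the expression for $r$ on the right by $\overline{Z}$ and push the result back into PBW form using equation \eqref{Eq:rank2} (applied with $a=0,\,b=t,\,c=1$ to rewrite $E_{i+1}^{(t)}E_i$) together with the quasi-commutations above. Matching the coefficient of $E_i^{(t)} E_{i,i+1}^{(r-t+1)} E_{i+1}^{(t)}$ in $[r+1]_v \overline{Z^{(r+1)}}$ reduces to the elementary identity $v^{-t}[r-t+1]_v + v^{r-t+1}[t]_v = [r+1]_v$, which is an immediate computation.

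With this closed form in hand, substitute into $\overline{E(\bx')}$ and set $t_i := x_i - y_i$. The result is already in $\bi_-$-PBW order: at each index $i$ the three adjacent divided powers $E_i^{(t_{i-1})} E_i^{(n+1-x_{i-1}-x_i)} E_i^{(t_i)}$ (with $t_0 = t_n = 0$) collapse via
\[
E_i^{(a)} E_i^{(b)} E_i^{(c)} = \bino{a+b+c}{c}_v \bino{a+b}{a}_v E_i^{(a+b+c)},
\]
producing $E_i^{(n+1-y_{i-1}-y_i)}$ with scalar factor $\bino{n+1-y_{i-1}-y_i}{x_i-y_i}_v \bino{n+1-y_{i-1}-x_i}{x_{i-1}-y_{i-1}}_v$. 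Since no longer root vector $E_{j,k}$ with $k-j\geq 2$ is ever activated, the assembled PBW monomial equals $E(\by')$ with $y_i = x_i - t_i$; the non-vanishing constraint $\bx \geq \by$ follows from $t_i \in \{0,\ldots,x_i\}$. Multiplying the scalar contributions $v^{-\binom{t_i}{2}}(v^{-1}-v)^{t_i}[t_i]_v!$ across $i=1,\ldots,n-1$ and the two $v$-binomials across $i=1,\ldots,n$ yields exactly the claimed formula.

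The main obstacle is the closed form for $\overline{E_{i,i+1}^{(r)}}$: guessing the correct shape and closing the induction requires a sharp PBW-rewriting involving \eqref{Eq:rank2} plus the cancellation identity above. Once that identity is secured, the assembly is essentially mechanical — the restricted shape of $E(\bx')$ (only simple and length-two root factors) ensures that all $v$-multinomial merges take place within a single simple-root position, and no new longer root vectors intervene.
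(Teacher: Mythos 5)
Your proof is correct and follows essentially the same route as the paper: both establish the closed PBW expansion of $\overline{E_{i,i+1}^{(r)}}$ via the quasi-commutation relations $E_{i,i+1}E_i=vE_iE_{i,i+1}$, $E_{i+1}E_{i,i+1}=vE_{i,i+1}E_{i+1}$ and then apply it factorwise to $E(\bx')$, merging adjacent powers of each $E_i$. The only difference is that you spell out the induction on $r$ (using equation \eqref{Eq:rank2} and the identity $v^{-t}[r-t+1]_v+v^{r-t+1}[t]_v=[r+1]_v$) that the paper dismisses as ``easy to show.''
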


\begin{proof}
Using the commutation relations $E_{i,i+1}E_i=vE_{i}E_{i,i+1}$ and $E_{i+1}E_{i,i+1}=vE_{i,i+1}E_{i+1}$, it is easy to show that
$$\overline{E_{i,i+1}^{(n)}}=\sum_{k=0}^n v^{-\frac{1}{2}k(k-1)}[k]_v! (v^{-1}-v)^kE_i^{(k)}E_{i,i+1}^{(n-k)}E_{i+1}^{(k)}.$$
Applying of this formula to $E(\bx')$ yields the claimed formula for the coefficients $w_{\by'}^{\bx'}$.
\end{proof}

\begin{corollary}\label{Cor:wgeneral}
Let $\bx\in\mathcal{P}$ and $\bp\in\mathbb{N}^N$. Then $w^{\bx'}_{\bp}\neq 0$ only if there exists $\by\in\mathcal{P}$ such that $\bx\geq\by$ and $\bp=\by'$.
\end{corollary}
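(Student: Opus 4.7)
The plan is to revisit the computation performed in the proof of Lemma \ref{Lem:w} and extract from it a sharper structural statement about the \emph{support} of $\overline{E(\bx')}$ in the PBW basis, not just the particular coefficients $w^{\bx'}_{\by'}$. Indeed, I expect the same calculation to show that all summands appearing in the expansion already have the form $E(\by')$.

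First I would use that the bar involution is a $\mathbb{Q}$-algebra map fixing each $E_i^{(m)}$, so that
$$\overline{E(\bx')} = E_1^{(n+1-x_1)}\cdot\overline{E_{1,2}^{(x_1)}}\cdot E_2^{(n+1-x_1-x_2)}\cdot\overline{E_{2,3}^{(x_2)}}\cdots\overline{E_{n-1,n}^{(x_{n-1})}}\cdot E_n^{(n+1-x_{n-1})}.$$
For each factor $\overline{E_{i,i+1}^{(x_i)}}$ I would substitute the formula established in the proof of Lemma \ref{Lem:w},
$$\overline{E_{i,i+1}^{(x_i)}}=\sum_{k_i=0}^{x_i}v^{-\frac{1}{2}k_i(k_i-1)}[k_i]_v!(v^{-1}-v)^{k_i}E_i^{(k_i)}E_{i,i+1}^{(x_i-k_i)}E_{i+1}^{(k_i)},$$
and then collect the resulting $(n{-}1)$-fold sum. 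For each multi-index $(k_1,\ldots,k_{n-1})$ with $0\leq k_i\leq x_i$ the individual summand is a scalar multiple of a product in which each pair of adjacent divided powers $E_i^{(a)}E_i^{(b)}$ merges via $E_i^{(a)}E_i^{(b)}=\bino{a+b}{a}_v E_i^{(a+b)}$. Setting $y_i:=x_i-k_i$, a straightforward bookkeeping shows that the merged exponent of $E_i$ becomes $n+1-y_{i-1}-y_i$, while the surviving $E_{i,i+1}$ factor has exponent $y_i$. Thus every summand is a scalar multiple of $E(\by')$ for $\by=(y_1,\ldots,y_{n-1})$.

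Since $\bx\in\mathcal{P}$ forces $x_i\leq t_i^0$ and $y_i\leq x_i$ then forces $y_i\leq t_i^0$, each such $\by$ lies in $\mathcal{P}$ with $\by\leq\bx$. Moreover, because $\by'$ has support only on the simple roots $\alpha_{i,i}$ and on the roots $\alpha_{i,i+1}$, and these appear precisely in the order $\alpha_{1,1},\alpha_{1,2},\alpha_{2,2},\alpha_{2,3},\ldots$ inside the PBW ordering induced by $\bi_-$, the monomial $E(\by')$ is literally the PBW basis element $E_{\bi_-}^{(\by')}$. Therefore
$$\overline{E_{\bi_-}^{(\bx')}}\in\sum_{\substack{\by\in\mathcal{P}\\ \by\leq\bx}}\mathbb{Z}[v,v^{-1}]\cdot E_{\bi_-}^{(\by')},$$
and $\mathbb{Z}[v,v^{-1}]$-linear independence of the PBW basis yields $w^{\bx'}_{\bp}=0$ whenever $\bp$ is not of the form $\by'$ with $\by\in\mathcal{P}$ and $\by\leq\bx$, which is the claim.

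The only potential obstacle is careful index management: one has to confirm that after all merges of the form $E_i^{(a)}E_i^{(b)}\to E_i^{(a+b)}$, the surviving exponents precisely reassemble into the pattern defining $E(\by')$, and that no stray $E_{j,j+1}$ factors are introduced. Both are routine once the substitution $y_i=x_i-k_i$ is in place, so the argument is entirely mechanical; the conceptual content lies in observing that the computation in Lemma \ref{Lem:w} already proves the stronger support statement claimed by the corollary.
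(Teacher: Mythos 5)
Your argument is correct and is exactly the computation the paper intends: the Corollary is stated without a separate proof precisely because the expansion $\overline{E_{i,i+1}^{(m)}}=\sum_k v^{-\frac{1}{2}k(k-1)}[k]_v!(v^{-1}-v)^kE_i^{(k)}E_{i,i+1}^{(m-k)}E_{i+1}^{(k)}$ used in Lemma \ref{Lem:w}, substituted factor-by-factor into $\overline{E(\bx')}$ and followed by merging adjacent $E_i$-divided powers via $E_i^{(a)}E_i^{(b)}=\bino{a+b}{a}_vE_i^{(a+b)}$, already shows that every PBW monomial appearing in the expansion is of the form $E_{\bi_-}^{(\by')}$ with $\by=\bx-\mathbf{k}\leq\bx$ in $\mathcal{P}$. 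Your bookkeeping of the exponents and the identification of $E(\by')$ with the PBW basis element for $\bi_-$ (using that only the roots $\alpha_{i,i}$, $\alpha_{i,i+1}$ carry nonzero multiplicities) are both correct, so the proposal reproduces the paper's implicit proof in full detail.
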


In particular, if we write $\bd=\bx-\by=(d_1,\cdots,d_{n-1})$ and set $d_0=d_n=0$, the previous formula reads:
\begin{equation}
w^{\bx'}_{\by'}=v^{-\frac{1}{2}\sum_{k=1}^{n-1}d_k(d_k-1)}(v^{-1}-v)^{\sum_{k=1}^{n-1}d_k}\prod_{k=1}^{n-1}[d_k]_v!\prod_{k=1}^n\bino{a_k+d_k+d_{k-1}}{d_k}_v\bino{a_k+d_{k-1}}{d_{k-1}}_v,
\end{equation}
where $a_k=n+1-x_{k-1}-x_k$.

\begin{lemma}\label{Lem:zeta}
For $\bx,\by\in\mathcal{P}$, the coefficient $\zeta_{\by'}^{\bx'}$ is non-zero only if $\bx\geq\by$.
\end{lemma}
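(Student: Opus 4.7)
The plan is to deduce Lemma \ref{Lem:zeta} directly from Corollary \ref{Cor:wgeneral} by exploiting the recursion \eqref{Eq:barcoefficient}. Rearranging \eqref{Eq:barcoefficient} as
$$\zeta^{\bx'}_{\bp} - \overline{\zeta^{\bx'}_{\bp}} = w^{\bx'}_{\bp} + \sum_{\bm:\bp\prec\bm\prec\bx'}\overline{\zeta^{\bx'}_{\bm}}\,w^{\bm}_{\bp},$$
and remembering that $\zeta^{\bx'}_{\bp}\in v^{-1}\mathbb{Z}[v^{-1}]$ (so that $\zeta^{\bx'}_{\bp} - \overline{\zeta^{\bx'}_{\bp}} = 0$ forces $\zeta^{\bx'}_{\bp}=0$), it is enough to prove the following strengthening: for every $\bp\preccurlyeq\bx'$, if $\zeta^{\bx'}_{\bp}\neq 0$ then $\bp=\by'$ for some $\by\in\mathcal{P}$ with $\bx\geq\by$.

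I would prove this by downward induction on $\bp$ along the partial order $\preccurlyeq$ on the finite set of indices below $\bx'$. The base case $\bp=\bx'$ is immediate since $\zeta^{\bx'}_{\bx'}=1$ and $\bx\geq\bx$. For the inductive step, assume $\zeta^{\bx'}_{\bp}\neq 0$; then the right-hand side of the displayed relation is non-zero, so at least one of the following holds. Either $w^{\bx'}_{\bp}\neq 0$, in which case Corollary \ref{Cor:wgeneral} applied with $\bx\in\mathcal{P}$ yields $\by\in\mathcal{P}$ with $\bp=\by'$ and $\bx\geq\by$. Or there exists $\bm$ with $\bp\prec\bm\prec\bx'$ such that both $\zeta^{\bx'}_{\bm}$ and $w^{\bm}_{\bp}$ are non-zero; the inductive hypothesis applied to $\bm$ gives $\bz\in\mathcal{P}$ with $\bm=\bz'$ and $\bx\geq\bz$, and then Corollary \ref{Cor:wgeneral} applied with $\bz\in\mathcal{P}$ gives $\by\in\mathcal{P}$ with $\bp=\by'$ and $\bz\geq\by$. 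Transitivity of the componentwise order then yields $\bx\geq\by$, completing the induction.

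The proof is thus essentially bookkeeping around the standard inversion of the relation between $\zeta$ and $w$ described in \cite[Section 9.11]{Lus1}. The only point that requires a moment's thought is the iterated invocation of Corollary \ref{Cor:wgeneral}: the corollary needs its upper index to lie in $\mathcal{P}$, and this is precisely what the inductive hypothesis on intermediate indices $\bm=\bz'$ supplies. No calculation with the explicit formula for $w$ is needed beyond the support statement already packaged in Corollary \ref{Cor:wgeneral}.
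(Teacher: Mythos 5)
Your argument is correct and follows essentially the same route as the paper: rearrange the recursion \eqref{Eq:barcoefficient}, invoke Corollary \ref{Cor:wgeneral} to control where the $w$-coefficients are supported, and use that $\zeta^{\bx'}_\bp\in v^{-1}\mathbb{Z}[v^{-1}]$ forces $\zeta^{\bx'}_\bp=0$ once the right-hand side vanishes. Your version is slightly cleaner in phrasing the induction as a statement about all $\bp\preccurlyeq\bx'$ (so that the inductive hypothesis directly justifies restricting the intermediate indices $\bm$ to the image of $\mathcal{P}$), a point the paper leaves somewhat implicit when it writes the sum over $\bz\in\mathcal{P}$.
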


\begin{proof}
By Corollary \ref{Cor:wgeneral}, the formula \eqref{Eq:barcoefficient} reads
$$\zeta^{\bx'}_{\by'}-\overline{\zeta^{\bx'}_{\by'}}=w^{\bx'}_{\by'}+\sum\overline{\zeta^{\bx'}_{\bz'}}w^{\bz'}_{\by'}.
$$
where the sum is over all $\bz\in\mathcal{P}$ such that $\bz\geq\by$ and $\bx'\succ\bz'$.

Assuming $\bx\ngeq\by$, we claim that either $w^{\bz'}_{\by'}=0$ or $\zeta^{\bx'}_{\bz'}=0$. Let us assume that $w^{\bz'}_{\by'}\neq 0$. By Lemma \ref{Lem:w}, $\bz\geq\by$ and hence $\bx\ngeq\bz$. The condition $\bz\geq\by$ allows us to apply induction on the set $\mathcal{P}_{\geq\bx}:=\{\bp\in\mathcal{P}\mid \bp\geq\bx\}$ to show that $\zeta^{\bx'}_{\by'}=0$.

Since $w^{\bx'}_{\by'}=0$, the above claim shows that $\zeta^{\bx'}_{\by'}-\overline{\zeta^{\bx'}_{\by'}}=0$. As $\zeta^{\bx'}_{\by'}\in v^{-1}\mathbb{Z}[v^{-1}]$, it must be zero.
\end{proof}

By Lemma \ref{Lem:expansionPBW}, we can expand $M$ into 
\begin{equation}\label{Eq:ME}
M=\sum_{\by\in\mathcal{P}}\lambda_\by E(\by').
\end{equation}
For $\by=(y_1,\cdots,y_{n-1})$,
\begin{equation}
\lambda_\by=v^{-\sum_{k=1}^{n-1}(k-y_k)(n-k-y_k)}\prod_{k=1}^n\bino{n+1-y_{k-1}-y_k}{k-y_k}_v,
\end{equation}
where we formally set $y_0=y_n=0$.

For $\bx\in\mathbb{N}^{n-1}$, let $b(\bx')$ be the canonical basis element corresponding to the PBW basis element $E(\bx')$.

Since the transition matrix between the canonical basis and the PBW basis is unipotent triangular, we can deduce that 
\begin{equation}\label{Eq:MB}
M=\sum_{\by\in\mathcal{P}}\mu_\by b(\by').
\end{equation}

Combining Equations \eqref{Eq:canonicalPBW}, \eqref{Eq:ME}, \eqref{Eq:MB} and applying a M\"obius type transformation to the partial order $\succ$, we can rewrite $\mu_\by$ as follows in view of Lemma \ref{Lem:zeta}:

\begin{lemma}\label{Lem:mu}
For $\by\in\mathcal{P}$, we have:
\begin{equation}\label{Eq:muxi}
\mu_{\by}=\lambda_{\by}-\sum_{\by<\bz}\lambda_{\bz}\left(\zeta^{\bz'}_{\by'}+\sum_{u\geq 1}(-1)^u\sum_{\by<\bp_1<\cdots<\bp_u<\bz}\zeta_{\by'}^{\bp_1'}\zeta_{\bp_1'}^{\bp_2'}\cdots\zeta_{\bp_u'}^{\bz'}\right).
\end{equation}
\end{lemma}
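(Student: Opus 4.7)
The plan is to prove Lemma \ref{Lem:mu} by Möbius-style inversion of the triangular transition matrix between canonical and PBW bases, restricted to elements indexed by $\mathcal{P}$. The starting observation is that Lemma \ref{Lem:zeta} (together with Corollary \ref{Cor:wgeneral}) ensures that for any $\bx\in\mathcal{P}$, the canonical basis element $b(\bx')$ expands as $b(\bx')=E(\bx')+\sum_{\by<\bx}\zeta^{\bx'}_{\by'}E(\by')$, with the sum running only over $\by\in\mathcal{P}$ with $\by<\bx$; in other words, the matrix $Z=(\zeta^{\bx'}_{\by'})_{\bx,\by\in\mathcal{P}}$ is unipotent upper triangular with respect to the partial order $\leq$ on $\mathcal{P}$.

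Next I would write $Z=I+N$, where $N$ has the entries $\zeta^{\bx'}_{\by'}$ for $\by<\bx$ (and $0$ elsewhere), and invert using the geometric series
\[
Z^{-1}=\sum_{k\geq 0}(-1)^k N^k,
\]
which terminates on each pair $(\bx,\by)$ because any strictly increasing chain in $\mathcal{P}$ between $\by$ and $\bx$ has bounded length. A direct expansion of $(N^{u+1})^{\bx'}_{\by'}$ as a sum over chains $\by<\bp_1<\cdots<\bp_u<\bx$ of products $\zeta^{\bp_1'}_{\by'}\zeta^{\bp_2'}_{\bp_1'}\cdots\zeta^{\bx'}_{\bp_u'}$ gives the explicit formula
\[
(Z^{-1})^{\bx'}_{\by'}=-\Bigl(\zeta^{\bx'}_{\by'}+\sum_{u\geq 1}(-1)^u\!\!\sum_{\by<\bp_1<\cdots<\bp_u<\bx}\zeta^{\bp_1'}_{\by'}\zeta^{\bp_2'}_{\bp_1'}\cdots\zeta^{\bx'}_{\bp_u'}\Bigr)
\]
for $\by<\bx$, and equals $1$ for $\by=\bx$. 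Thus $E(\bx')=b(\bx')+\sum_{\by<\bx}(Z^{-1})^{\bx'}_{\by'}\,b(\by')$.

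Substituting this expression for $E(\bx')$ into \eqref{Eq:ME} and interchanging the order of summation gives
\[
M=\sum_{\by\in\mathcal{P}}\Bigl(\lambda_{\by}+\sum_{\bz>\by}\lambda_{\bz}(Z^{-1})^{\bz'}_{\by'}\Bigr)b(\by'),
\]
and comparison with \eqref{Eq:MB} identifies the bracket with $\mu_{\by}$. Inserting the explicit chain formula for $(Z^{-1})^{\bz'}_{\by'}$ yields precisely \eqref{Eq:muxi}.

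The proof is essentially bookkeeping; the only point that requires some care is justifying that the triangular inversion can be carried out entirely inside $\mathcal{P}$, i.e.\ that no PBW basis element $E_{\bi}^{(\bm)}$ with $\bm$ not of the form $\by'$ enters the expansion of any $b(\bx')$ with $\bx\in\mathcal{P}$. This follows from Corollary \ref{Cor:wgeneral} via the recursion \eqref{Eq:barcoefficient} for $\zeta$, by the same inductive argument already used in the proof of Lemma \ref{Lem:zeta}. Once this compatibility is in place, the remainder of the argument is just the standard Möbius inversion on the finite interval $[\by,\bz]$ of $\mathcal{P}$.
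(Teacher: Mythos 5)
Your proof is correct and fills in exactly what the paper only sketches: the paper's proof consists of the single sentence ``Combining Equations \eqref{Eq:canonicalPBW}, \eqref{Eq:ME}, \eqref{Eq:MB} and applying a M\"obius type transformation,'' and your geometric-series inversion $Z^{-1}=\sum_{k\geq 0}(-1)^kN^k$ of the unipotent triangular transition matrix is precisely that M\"obius inversion, with the crucial closure property (the inversion stays within $\{\by':\by\in\mathcal{P}\}$) correctly attributed to Corollary \ref{Cor:wgeneral} and the inductive argument of Lemma \ref{Lem:zeta}. Nothing to add.
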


For $\bx,\by\in\mathcal{P}$, the constants $\zeta_{\by'}^{\bx'}$ can be computed recursively from Equation \eqref{Eq:barcoefficient} once the values of $w_{\by'}^{\bx'}$ are known.


\subsection{Non-vanishing property of coefficients}\label{subsec:nonvan}

Recall that $\mathcal{M}_n$ is the set of Motzkin paths from $(0,0)$ to $(0,n)$.

\begin{proposition}
For $\by\in\mathcal{M}_n$ we have $\mu_\by\neq 0$.
\end{proposition}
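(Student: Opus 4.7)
The strategy is to extract and examine the leading $v$-degree term of $\mu_\by$. First, a direct inspection of the formula for $\lambda_\by$ shows
$$\lambda_\by=v^{D(\by)}+\text{(strictly lower powers of $v$)},$$
with leading coefficient $1$, where
$$D(\by):=\sum_{k=1}^{n}(k-y_k)\bigl(1+y_k-y_{k-1}\bigr)$$
and $y_0=y_n=0$. This comes from the fact that $\bino{A}{B}_v$ has top $v$-degree $v^{B(A-B)}$ with coefficient $1$; a straightforward summation then absorbs the monomial prefactor $v^{-\sum_{k=1}^{n-1}(k-y_k)(n-k-y_k)}$ and yields $D(\by)$. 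The same calculation shows $\deg_v\lambda_\bz=D(\bz)$ with leading coefficient $1$ for every $\bz\in\mathcal{P}$.

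By Lemma~\ref{Lem:zeta} and the general fact that $\zeta^{\bp'}_{\bq'}\in v^{-1}\mathbb{Z}[v^{-1}]$ for $\bp'\succ\bq'$, every summand appearing inside the bracket of Lemma~\ref{Lem:mu} (either $\zeta^{\bz'}_{\by'}$ itself, or a longer product $\zeta^{\bp_1'}_{\by'}\cdots\zeta^{\bz'}_{\bp_u'}$) lies in $v^{-1}\mathbb{Z}[v^{-1}]$, hence has $v$-degree at most $-1$. Therefore, for each $\bz>\by$, the correction term $\lambda_\bz\cdot(\text{bracket})$ has $v$-degree at most $D(\bz)-1$.

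The crux of the plan is the strict inequality $D(\bz)<D(\by)$ for every $\bz\in\mathcal{P}$ with $\bz>\by$, whenever $\by\in\mathcal{M}_n$. Writing $\bz=\by+\bc$ with $\bc\geq 0$, $c_0=c_n=0$ and $\bc\ne 0$, an Abel-summation expansion of the quadratic polynomial $D$ gives
$$D(\by)-D(\bz)=\sum_{k=1}^{n-1}c_k\bigl(2+2y_k-y_{k-1}-y_{k+1}\bigr)+\frac{1}{2}\sum_{k=1}^{n}(c_k-c_{k-1})^2.$$
The Motzkin condition $y_k-y_{k\pm 1}\in\{-1,0,1\}$ yields $y_{k-1}+y_{k+1}\le 2y_k+2$, making the first sum non-negative; the second sum is strictly positive because $\bc\ne 0$ together with $c_0=c_n=0$ force some $c_k\ne c_{k-1}$.

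Combining these: since $D(\bz)<D(\by)$ and both are integers, every correction term has $v$-degree at most $D(\by)-2$, so the coefficient of $v^{D(\by)}$ in the entire correction sum vanishes; meanwhile $\lambda_\by$ contributes $v^{D(\by)}$ with coefficient $1$. Hence the coefficient of $v^{D(\by)}$ in $\mu_\by$ is $1$, and $\mu_\by\ne 0$. The real content of the argument is the concavity estimate for $D$, which is exactly where the Motzkin small-step constraint is used; once that is in hand, the remaining steps are pure degree bookkeeping resting only on the strict upper-triangularity of the transition $\zeta$-matrix.
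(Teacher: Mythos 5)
Your argument is correct and follows essentially the same route as the paper's: reduce via Lemma~\ref{Lem:mu} and the fact that the $\zeta$-coefficients lie in $v^{-1}\mathbb{Z}[v^{-1}]$ to a degree comparison $\deg\lambda_\by\geq\deg\lambda_\bz$, then expand the difference as a non-negative quadratic form in $\bc=\bz-\by$ plus a linear term controlled by the Motzkin small-step condition. Two cosmetic remarks: your identity for $D(\by)-D(\bz)$ with the full sum $\frac{1}{2}\sum_{k=1}^n(c_k-c_{k-1})^2$ (using $c_0=c_n=0$) is the accurate completion of the square, whereas the paper's displayed $\sum_{k=1}^{n-2}\frac{1}{2}(\alpha_k-\alpha_{k+1})^2$ drops the boundary contributions $\frac{1}{2}\alpha_1^2+\frac{1}{2}\alpha_{n-1}^2$ (a harmless typo, since only non-negativity is used); and the strict inequality $D(\bz)<D(\by)$ you establish is stronger than needed---the non-strict $D(\bz)\leq D(\by)$ already forces every correction term to live in degree at most $D(\by)-1$, so the leading coefficient of $\lambda_\by$ survives.
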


\begin{proof}
Let $\bz\in\mathcal{P}$ such that $\bz>\by$. By Lemma \ref{Lem:mu} and the fact that the coefficients $\zeta_\bm^\bn$ are in $v^{-1}\mathbb{Z}[v^{-1}]$, it suffices to show that the degree of $\lambda_\by$ is greater or equal to the degree of $\lambda_\bz$.

The degree of $\lambda_\by$ is given by:
\begin{eqnarray*}
\deg\lambda_\by&=&\sum_{k=1}^n(k-y_k)(n+1-k-y_{k-1})-\sum_{k=1}^{n-1}(k-y_k)(n-k-y_k)\\
&=&(1-y_1)n+\sum_{k=1}^{n-1}(k+1-y_{k+1})(n-k-y_k)-\sum_{k=1}^{n-1}(k-y_k)(n-k-y_k)\\
&=& (1-y_1)n+\sum_{k=1}^{n-1}(n-k-y_k)(y_k-y_{k+1}+1).
\end{eqnarray*}
Hence the difference of degrees equals

\begin{eqnarray*}
& &\deg\lambda_\by-\deg\lambda_\bz\\
&=& n(z_1-y_1)+\sum_{k=1}^{n-1}((n-k-y_k)(y_k-y_{k+1}+1)-(n-k-z_k)(z_k-z_{k+1}+1))\\
&=& n(z_1-y_1)+\sum_{k=1}^{n-1}((z_k-y_k)(k+1-n-z_{k+1})+(z_{k+1}-y_{k+1})(n-k-y_k)+(z_k^2-y_k^2))\\
&=& n(z_1-y_1)+(z_1^2-y_1^2)+(z_1-y_1)(2-n-z_2)+\sum_{k=2}^{n-1}(z_k-y_k)(z_k+y_k-z_{k+1}-y_{k-1}+2)\\
&=& \sum_{k=1}^{n-1}(z_k-y_k)(z_k-z_{k+1}+y_k-y_{k-1}+2).
\end{eqnarray*}
As $\bz-\by> 0$, we can assume that for $k=1,\cdots,n-1$, we have $z_k=y_k+\alpha_k$ for some $\alpha_k\in\mathbb{N}$. Then the above formula reads:
\begin{eqnarray*}
& &\sum_{k=1}^{n-1}\alpha_k (y_k+\alpha_k-y_{k+1}-\alpha_{k+1}+y_k-y_{k-1}+2)\\
&=&\sum_{k=1}^{n-1} (\alpha_k^2-\alpha_k\alpha_{k+1})+\sum_{k=1}^{n-1}\alpha_k(y_k-y_{k+1}+y_k-y_{k-1}+2)\\
&=&\sum_{k=1}^{n-2}\frac{1}{2}(\alpha_k-\alpha_{k+1})^2+\sum_{k=1}^{n-1}\alpha_k((y_k-y_{k+1}+1)+(y_k-y_{k-1}+1)).
\end{eqnarray*}
Since $\by\in\mathcal{M}_n$, $\deg\lambda_\by-\deg\lambda_\bz$ is non-negative. This finishes the proof.
\end{proof}

\section{Proof and discussion of the main result}\label{sec:proof}

\subsection{Proof of the main result}\label{subsec:proof}

In light of Section \ref{sec:geometric}, our main result Theorem \ref{thm:main} reduces to the following statement:

In the expansion
$$M=\sum_{\mathbf{r}}\gamma_\mathbf{r}b_+(\mathbf{r}),$$
a rank tuple $\mathbf{r}\geq\mathbf{r}^1$ appears with non-zero coefficient $\gamma_\mathbf{r}$ if and only if $\mathbf{r}$ is of the form $\mathbf{r}(\mathbf{x}')$ for a Motzkin path $\mathbf{x}$.

By the previous section, we have
$$M=\sum_{\mathbf{x}\in\mathcal{P}}\gamma_\mathbf{x}b_-(\mathbf{x}'),$$
with $\gamma_\mathbf{x}\not=0$ if $\mathbf{x}\in \mathcal{M}_n$ is a Motzkin path. By the definition of the multi-segment duality, this equation can be rewritten as
$$M=\sum_{\mathbf{x}\in\mathcal{P}}\gamma_\mathbf{x}b_+(\mathbf{\hat{r}}(\hat{\zeta}(\mathbf{x}'))).$$

We thus want to decide for which $\mathbf{x}\in\mathcal{P}$ the inequality $\mathbf{\hat{r}}(\hat{\zeta}(\mathbf{x}'))\geq\mathbf{r}^1$ holds. This condition being invariant under the involution $\hat{\cdot}$, it reduces to the condition $r_{i,i+1}(\hat{\zeta}(\mathbf{x}'))\geq n$. By Proposition \ref{prop:minima}, this is equivalent to $\mathbf{x}$ being a Motzkin path.

We thus find that a rank tuple $\mathbf{r}\geq\mathbf{r}^1$ appears in the above expansion with non-zero coefficient $\gamma_\mathbf{r}$ only if $\mathbf{r}=\mathbf{r}(\mathbf{x})$ for a Motzkin path $\mathbf{x}$. But in this case, we already know that the coefficent $\gamma_{\mathbf{r}}$ is non-zero, and the claim follows.

\subsection{Asymptotics}\label{subsec:asymp}

We would like to decide how large the set of supports is compared to the set of all orbit closures $\overline{\mathcal{O}(\mathbf{r})}$ for $\mathcal{O}(\mathbf{r})\subset U$, at least asymptotically for large $n$.

\begin{proposition} The fraction of the number of supports by the number of all orbit closures tends to zero exponentially fast for $n\rightarrow\infty$.
\end{proposition}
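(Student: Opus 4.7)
The number of supports equals $|\mathcal{M}_n| = M_n$ by Theorem \ref{thm:main}, and the classical asymptotic $M_n = \Theta(3^n/n^{3/2})$ is well-known. Hence it suffices to exhibit a family of orbit closures $\overline{\mathcal{O}(\mathbf{r})} \subset U$ whose cardinality grows strictly faster than $3^n$; in fact I shall produce a super-exponential family.

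The plan is to restrict to rank tuples $\mathbf{r}$ satisfying the extremal condition $r_{i,i+1} = n$ for every $i = 1,\ldots,n-1$. By criterion (3) of the theorem from \cite{CFFFR} recalled in Section \ref{subsec:mainresult}, any such $\mathbf{r}$ satisfies $\mathbf{r} \geq \mathbf{r}^1$, so the corresponding orbit lies in $U$. The identity $r_{i,i+1} = (n+1) - \sum_{k=1}^{i} m_{k,i}$ (summing over segments ending at $i$) then forces exactly one segment of the multi-segment $\mathbf{m}(\mathbf{r})$ to end at each internal position $i \in \{1,\ldots,n-1\}$, and symmetrically exactly one to start at each position in $\{2,\ldots,n\}$. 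Consequently each such multi-segment decomposes as some multiplicity $m_A$ of the full segment $[1,n]$ together with multiplicity-one ``internal'' segments $[k,\ell]$ (with $2 \leq k \leq \ell \leq n-1$) and multiplicity-one ``boundary'' segments $[1,\ell]$ and $[k,n]$, subject to the constraint that each internal start and each internal end is used exactly once; $m_A$ is then forced by the dimension constraint at position $1$.

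Reindexing internal-internal segments by $[k,\ell] \leftrightarrow (k-1,\ell)$, the free combinatorial parameter is a partial injection $\sigma$ on $\{1,\ldots,n-1\}$ with $\sigma(\ell) > \ell$ on its domain. The classical chain-decomposition bijection identifies such partial injections with set partitions of $\{1,\ldots,n-1\}$: a block $\{a_1 < a_2 < \cdots < a_r\}$ encodes the chain $\sigma(a_i) = a_{i+1}$ for $i < r$. This produces at least $B_{n-1}$ distinct orbit closures in $U$, where $B_m$ denotes the $m$-th Bell number.

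Since $\log B_m \sim m \log m$ grows strictly faster than $\log M_n \sim n \log 3$, the ratio $M_n / B_{n-1}$ decays faster than any exponential, a fortiori exponentially fast. The main step requiring care is verifying the chain-decomposition bijection and the passage between internal-internal segments and partial injections; the containment $\mathcal{O}(\mathbf{r}) \subset U$ and pairwise distinctness of the orbits produced are immediate from the construction.
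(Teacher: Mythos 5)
Your proof is correct and reaches the same conclusion by a genuinely different route. The paper simply cites that the total number of orbit closures in $U$ is the Bell number $B_n$ (from \cite[Section 4.2]{CFFFR}) and then invokes Epstein's lower bound $B_n \geq n^{n(1-O(1/\log^2 n))}$ together with the crude bound $M_n = O(3^n)$. You instead bypass both external inputs: rather than using the exact count $B_n$, you produce by hand a sub-family of cardinality $B_{n-1}$, namely the orbits with $r_{i,i+1}=n$ for all $i$, by translating the condition into a constraint on multi-segments (one segment ending at each internal position, one starting at each internal position, with the full-segment multiplicity forced by the dimension vector), reindexing to strict partial injections on $\{1,\ldots,n-1\}$, and then applying the standard bijection between strict partial injections and set partitions. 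Combined with the sharper asymptotic $M_n=\Theta(3^n/n^{3/2})$ and the elementary observation $\log B_{n-1}\sim n\log n \gg n\log 3$, you get super-exponential decay. The upshot is that the paper's proof is shorter but leans on two external facts, while yours is essentially self-contained (modulo a classical bijection) and gives an explicit witnessing family, at the cost of needing to verify that the constructed multi-segments all have dimension vector $(n+1,\ldots,n+1)$ -- which you address by pinning $m_{1,n}$ at position $1$ and noting the remaining dimensions are then propagated, though stating this telescoping step explicitly would tighten the argument.

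One small stylistic caveat: the notation $\overline{\mathcal{O}(\mathbf{r})}\subset U$ is a slight abuse, since an orbit closure in $R$ will typically contain boundary orbits outside $U$; what is meant (and what the paper also means) is $\mathcal{O}(\mathbf{r})\subset U$, and the count is of such orbits, equivalently of their closures as subsets of $R$.
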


\begin{proof}
The number of supports equals the $n$-th Motzkin number $M_n$, and the number of all orbit closures equals the $n$-th Bell number $B_n$ by \cite[Section 4.2.]{CFFFR}.

%

By \cite[Equation (48b)]{E}, we have $B_n\geq n^{n(1-\zeta_n)}$ where 
$$\zeta_n=O\left(\frac{1}{\log^2 n}\right).$$
We have $M_n\leq O(3^n)$ trivially. It thus suffices to show that for any $C>0$,
$$\lim_{n\rightarrow\infty}\frac{C^n}{n^{n(1-\log^{-2} n)}}=0.$$
If $C=0$ there is nothing to prove. If $C\neq 0$,
as 
$$\frac{C^n}{n^{n(1-\log^{-2} n)}}=\frac{C^n n^{n\log^{-2}n}}{n^n}=\exp(n(\log^{-1}n-\log n+\log C)),$$
when $n\rightarrow\infty$, this term tends to zero.
\end{proof}


\subsection{PBW locus}

Inside $U\subset R$ there exists an open subset $U_{\mathrm{PBW}}\subseteq U$, called PBW locus, such that the fibers $\mathrm{Fl}^{f_*}(V)$ over this locus can be naturally identified with Schubert varieties \cite{CFFFR} in some partial flag varieties. The following definition of the PBW locus rephrases Definition 3, Definition 4 and Proposition 2 in \cite{CFFFR}.

\begin{definition}\label{Defn:PBW}
An orbit $\mathcal{O}(\mathbf{r})$ belongs to the PBW locus if the rank tuple $\mathbf{r}=(r_{ij})_{1\leq i\leq j\leq n}$ satisfies the following properties:
\begin{enumerate}
\item for $1\leq k\leq n-1$, we have $r_{k,k+1}\in\{n,n+1\}$;
\item for $1\leq i\leq j\leq n$ with $j-i\geq 2$, we have
$$r_{ij}=n+1-\#\{k\mid i\leq k\leq j-1,\ r_{k,k+1}=n\}.$$
\end{enumerate}
\end{definition}

Notice that these rank tuples are uniquely determined by $r_{k,k+1}$ for $k=1,\cdots,n-1$, hence there are exactly $2^{n-1}$ such orbits.

\begin{proposition}\label{Prop:PBW}
Let $\mathbf{r}$ be a rank tuple such that the orbit $\mathcal{O}(\mathbf{r})$ is contained in the PBW locus. Then $\overline{\mathcal{O}(\mathbf{r})}$ is contained in the support of $\pi:\mathcal{F}_U\ra U$.
\end{proposition}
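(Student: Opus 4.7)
By Theorem~\ref{thm:main}, it suffices to exhibit, for every PBW rank tuple $\mathbf{r}$, a Motzkin path $\mathbf{x}\in\mathcal{M}_n$ with $\mathbf{r}(\mathbf{x})=\mathbf{r}$. By Definition~\ref{Defn:PBW}, such $\mathbf{r}$ is determined by the subset $S:=\{k\in\{1,\ldots,n-1\}:r_{k,k+1}=n\}$, so my plan is to produce an explicit Motzkin path $\mathbf{x}^S$ from $S$ and verify, through Proposition~\ref{prop:minima}, that it realizes the given $\mathbf{r}$.

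I would take
\[x_i^S:=\min\bigl(f(i),g(i)\bigr),\qquad f(i):=|S\cap[1,i]|,\quad g(i):=|S\cap[i,n-1]|,\]
for $i=0,\ldots,n$. Since $f$ is non-decreasing with unit jumps and $g$ is non-increasing with unit drops, a brief four-case check on which of $f,g$ realizes the minimum at $i-1$ and at $i$ gives $x_i^S-x_{i-1}^S\in\{-1,0,1\}$, and $x_0^S=x_n^S=0$, so $\mathbf{x}^S\in\mathcal{M}_n$; moreover $\mathbf{x}^S$ is unimodal, rising on $[0,p^*]$ and falling on $[p^*,n]$, where $p^*$ is the smallest index with $f(p^*)\geq g(p^*)$. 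The boundary check $r_{k,k+1}(\mathbf{x}^S)=n\iff k\in S$ then follows from Proposition~\ref{prop:minima}: one verifies that $k$ is a weak local minimum of $\mathbf{x}^S$ precisely when $k\notin S$, by a routine case analysis distinguishing $k<p^*$, $k=p^*$ and $k>p^*$.

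The technical heart of the argument is the identity
\[F_{ij}(\mathbf{x}^S):=\max_{i\leq k\leq l\leq m\leq j}\bigl(x_{l-1}^S+x_l^S-x_{k-1}^S-x_m^S\bigr)=|S\cap[i,j-1]|\]
for $j-i\geq 2$. For the upper bound I use the two representations $x_p^S=f(p)-e_p=g(p)-\tilde{e}_p$ with $e_p:=\max(0,f(p)-g(p))$ non-decreasing and $\tilde{e}_p:=\max(0,g(p)-f(p))$ non-increasing (both $\geq 0$). Substituting the first at $k-1,l-1$ and the second at $l,m$ rewrites $x_{l-1}^S+x_l^S-x_{k-1}^S-x_m^S$ as
\[\bigl(f(l-1)-f(k-1)\bigr)+\bigl(g(l)-g(m)\bigr)+(e_{k-1}-e_{l-1})+(\tilde{e}_m-\tilde{e}_l);\]
the last two summands are $\leq 0$ by monotonicity of $e$ and $\tilde{e}$, while the first two sum to $|S\cap[k,m-1]|\leq|S\cap[i,j-1]|$. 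For the lower bound, when $p^*\in[i,j]$ the triple $(k,l,m)=(i,p^*,j)$ works: the identity $f(l-1)+g(l)=|S|$ (valid for every $l$) gives $x_{p^*-1}^S+x_{p^*}^S=f(p^*-1)+g(p^*)=|S|$, and $x_{i-1}^S+x_j^S\leq f(i-1)+g(j)=|S|-|S\cap[i,j-1]|$ yields the desired lower bound. When $p^*\notin[i,j]$, $\mathbf{x}^S$ is monotone on $[i-1,j]$ and $F_{ij}$ collapses to the elementary expression $f(j-1)-f(i-1)$ or $g(i)-g(j)$, both equal to $|S\cap[i,j-1]|$.

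The main obstacle is the peak-location case split in the lower bound: one must ensure $p^*\in[i,j]$ so that $l=p^*$ is a valid choice, and otherwise fall back on the monotone regime. Once this is arranged, Proposition~\ref{prop:minima} gives $\mathbf{r}(\mathbf{x}^S)=\mathbf{r}$, and the proposition follows from Theorem~\ref{thm:main}.
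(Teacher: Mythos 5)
Your proof is correct, and it takes a genuinely different, more constructive route than the paper. The paper introduces the set $\mathcal{S}_n$ of single-peak Motzkin paths, proves the counting lemma $\#\mathcal{S}_n=2^{n-1}$ (matching the number of PBW orbits), and then verifies that each single-peak path maps to a PBW rank tuple via Proposition~\ref{prop:minima}; the conclusion is a bijection obtained by a cardinality argument (relying on injectivity of $\hat{\zeta}$). You instead go in the opposite direction: starting from the set $S=\{k:r_{k,k+1}=n\}$ that determines the PBW rank tuple, you construct an explicit Motzkin path
\[
x_i^S=\min\bigl(|S\cap[1,i]|,\;|S\cap[i,n-1]|\bigr),
\]
verify it lies in $\mathcal{M}_n$, and check directly via Proposition~\ref{prop:minima} that $\mathbf{r}(\mathbf{x}^S)=\mathbf{r}$ by proving $\max_{i\le k\le l\le m\le j}(x_{l-1}^S+x_l^S-x_{k-1}^S-x_m^S)=|S\cap[i,j-1]|$. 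This bypasses the counting argument entirely and makes the bijection between PBW orbits and single-peak Motzkin paths explicit (indeed, your $\mathbf{x}^S$ is unimodal, so it is in the paper's $\mathcal{S}_n$, and your construction is effectively the inverse of the map the paper implicitly uses). The upper-bound manipulation via the correction terms $e_p,\tilde{e}_p$ and the lower bound via the identity $f(l-1)+g(l)=|S|$ are clean and check out; the peak-location case split is handled correctly, including the degenerate case $p^*\notin[i,j]$ where the path is monotone on $[i-1,j]$. The paper's route buys a slightly shorter verification (only one direction needs to be checked, the rest is counting); yours buys an explicit formula for the path realizing a given PBW orbit, which is arguably more informative.
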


We start with giving the candidates in Motzkin paths parameterising the orbits in the PBW locus.

\begin{definition}
A Motzkin path $\mathbf{x}=(x_1,\cdots,x_{n-1})\in\mathcal{M}_n$ is said to have a single peak, if there exists $1\leq p\leq n-1$ such that
\begin{enumerate}
\item for any $1\leq s\leq p$, we have $x_{s-1}\leq s$;
\item for any $p\leq t\leq n-1$, we have $x_t\geq x_{t+1}$.
\end{enumerate}
If this is the case, $p$ is called a peak. Let $\mathcal{S}_n$ denote the set of Motzkin paths in $\mathcal{M}_n$ having a single peak.
\end{definition}

\begin{lemma}
The cardinality of $\mathcal{S}_n$ is $2^{n-1}$.
\end{lemma}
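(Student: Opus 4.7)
The plan is to translate the single-peak condition into a purely combinatorial condition on step sequences, classify such paths by their number of up-steps, and invoke the standard identity $\sum_{h}\binom{n}{2h}=2^{n-1}$.

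First I would encode a Motzkin path $\mathbf{x}$ by its step sequence $\varepsilon_i := x_i-x_{i-1}\in\{-1,0,+1\}$ for $i=1,\ldots,n$, calling these down (D), level (L), and up (U) steps respectively. Condition (2) of the definition says precisely that no U-step occurs among $\varepsilon_{p+1},\ldots,\varepsilon_n$. Condition (1) as literally written is automatic for every Motzkin path (since $x_{s-1}\le s-1<s$), so I take it to be its evident symmetric counterpart $x_{s-1}\le x_s$ for $1\le s\le p$, i.e., no D-step appears among $\varepsilon_1,\ldots,\varepsilon_p$. Existence of a single peak is then equivalent to the purely combinatorial condition that no D-step precedes any U-step in $(\varepsilon_1,\ldots,\varepsilon_n)$.

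Next I would classify such sequences by their number $h\ge 0$ of up-steps. Since $x_n=x_0=0$, there must be exactly $h$ down-steps and hence $n-2h$ level steps. The ``U's before D's'' constraint means that the entire sequence is determined once one selects the $n-2h$ positions in $\{1,\ldots,n\}$ carrying the L-steps: the remaining $2h$ positions automatically split into a first block of $h$ U's followed by a final block of $h$ D's. Non-negativity of the partial sums $x_i$ is automatic in this arrangement, since the running height reaches $h$ after the U-block and never drops below zero thereafter. This yields exactly $\binom{n}{2h}$ single-peak paths with $h$ up-steps.

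Summing over $h$ gives
$$|\mathcal{S}_n|\;=\;\sum_{h=0}^{\lfloor n/2\rfloor}\binom{n}{2h}\;=\;2^{n-1},$$
as required. There is essentially no obstacle beyond pinning down the intended reading of condition (1), which must be its symmetric counterpart for the notion of ``single peak'' to have non-trivial content; consistency with the PBW locus parametrisation (Definition \ref{Defn:PBW}), whose indexing set $\{n,n+1\}^{n-1}$ has the same cardinality $2^{n-1}$, provides an immediate sanity check on the final count.
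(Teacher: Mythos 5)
Your proposal is correct, and it takes a genuinely different route from the paper. The paper argues by induction on $n$: it stratifies $\mathcal{S}_n$ according to whether $x_{n-1}=0$ (strip the last level step, landing in $\mathcal{S}_{n-1}$) or $x_{n-1}=1$ with the first index $i$ such that $x_i=1$ (strip the leading level steps together with the first up step and the final down step), arriving at the telescoping count $2^{n-2}+\cdots+2^0+1=2^{n-1}$. You instead give a closed bijective count: after encoding a path by its step sequence, you observe that single-peak is equivalent to every up step preceding every down step, that such a sequence is determined by the positions of its $n-2h$ level steps, and that summing $\binom{n}{2h}$ over $h$ gives $2^{n-1}$. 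Your reading of condition (1) as the symmetric counterpart $x_{s-1}\le x_s$ (the literal $x_{s-1}\le s$ being vacuous) is indeed the intended one — it is what makes the notion non-trivial and is consistent with the proof of Proposition \ref{Prop:PBW}, where the maximal peak $p$ is taken to satisfy $x_{p+1}=x_p-1$ when $p<n-1$. Your argument is cleaner, avoids the recursive bookkeeping, and produces an explicit bijection of $\mathcal{S}_n$ with even-size subsets of $\{1,\ldots,n\}$.
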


\begin{proof}
The proof is executed by induction. We consider some $\mathbf{x}=(x_1,\cdots,x_{n-1})\in\mathcal{S}_n$. If $x_{n-1}=0$ then $(x_1,\cdots,x_{n-2})\in\mathcal{S}_{n-1}$: by induction there are $2^{n-2}$ such paths. If $x_{n-1}=1$ we look at $x_1$: if $x_1=1$ then $(x_2-1,\cdots,x_{n-2}-1)\in\mathcal{S}_{n-2}$, by induction there are $2^{n-3}$ such paths; if $x_1=0$ we continue to look at $x_2$. Repeating this procedure we count the cardinality of $\mathcal{S}_n$:
$$\#\mathcal{S}_n=2^{n-2}+\cdots+2^1+2^0+1=2^{n-1}.$$
\end{proof}

\begin{proof}[Proof of Proposition \ref{Prop:PBW}]
As both $\mathcal{S}_n$ and the number of orbits in the PBW locus have the same cardinality, by the invariance under the involution $\hat{\cdot}$, it suffices to show that for a single peak Motzkin path $\mathbf{x}=(x_1,\cdots,x_{n-1})\in\mathcal{S}_n$, the orbit $\mathcal{O}(\mathbf{r}(\hat{\zeta}(\mathbf{x}')))$ is contained in the PBW locus.

Using Proposition \ref{prop:minima}, we show that for any $1\leq i\leq j\leq n$, the ranks $r_{ij}(\hat{\zeta}(\mathbf{x}'))$ coincide with those given in Definition \ref{Defn:PBW}:
\begin{enumerate}
\item The condition $r_{k,k+1}\in\{n,n+1\}$ is clear by Proposition \ref{prop:minima}. Moreover, $r_{k,k+1}=n$ if and only if $\max(0,x_i-x_{i+1},x_i-x_{i-1})=1$, which is equivalent to either $x_i=x_{i+1}+1$ or $x_i=x_{i-1}+1$.
\item Assume that $1\leq p\leq n-1$ such that $p$ is maximal among the peaks of $\mathbf{x}$: under this assumption $x_{p+1}=x_p-1$.
\begin{itemize}
\item If $1\leq i<j\leq p$, the condition (2) in Definition \ref{Defn:PBW} can be rewritten as: 
\begin{eqnarray*}
r_{ij} &=& n+1-\#\{k\mid i\leq k\leq j-1,\ x_k=x_{k-1}+1\}\\
&=& n+1-(x_{j-1}-x_{i-1}).
\end{eqnarray*}
In this case, the maximum in Proposition \ref{prop:minima} is attained when $l=m=j$ and $k=i$, and the maximum is clearly $x_{j-1}-x_{i-1}$.
\item If $p\leq i<j\leq n-1$, a similar argument shows that $r_{ij}=n+1-(x_i-x_j)$, and the maximum in Proposition \ref{prop:minima} is attained when $l-1=k-1$, $l=i$ and $m=j$, and the maximum is $x_i-x_j$.
\item If $i\leq p\leq j$, the rank $r_{ij}$ in Definition \ref{Defn:PBW} count the number of those $k$ such that either $i\leq k\leq p$ and $x_k=x_{k-1}+1$, or $p\leq k\leq j-1$ and $x_k=x_{k+1}+1$. By considering two cases $x_p=x_{p-1}+1$ and $x_p=x_{p-1}$ we obtain the uniform formula:
$$r_{ij}=n+1-(x_{p-1}-x_{i-1})-(x_p-x_j).$$
We then examine the maximum in Proposition \ref{prop:minima}: $\min(x_{k-1}+x_m)$ is attained when $k=i$ and $m=j$; and $\max(x_{l-1}+x_l)$ is attained when $l=p$. In this case the maximum is $(x_{p-1}-x_{i-1})+(x_p-x_j)$.
\end{itemize}
\end{enumerate}

\end{proof}

\subsection{Remarks}\label{subsec:remarks}

We discuss some limitations of our approach and potential directions for further explorations.

\begin{itemize}
\item  Our approach to the determination of the set of supports is not strong enough to give a general description of the graded vector spaces $V^*(\mathbf{r})$ encoding shifts and multiplicities of intersection cohomology complexes in the decomposition
$$\mathrm{R}\pi_*\mathbb{Q}_{\mathcal{F}_U}\simeq\bigoplus_{\mathbf{r}}{\rm IC}(\overline{\mathcal{O}(\mathbf{r})})\otimes V^*(\mathbf{r}).$$
Namely, the Poincar\'e polynomial of $V^*(\mathbf{r})$ equals (up to shift) the coefficient $\gamma_\mathbf{r}$ above. Since the relevant canonical basis elements $b_+(\mathbf{r})$ are not explicitly known except for small $n$, the coefficients $\gamma_\mathbf{r}$ are not known.

\item This missing information prevents us from applying our main result quantitatively, as a tool to determine the cohomology of the degenerations $\mathrm{Fl}^{f_*}(V)$. Fortunately, this cohomology can be determined using the affine pavings of \cite{CFFFR}.

\item Another main result of \cite{CFFFR} determines the flat locus $U'$ of the family $\pi:\mathcal{F}\rightarrow R$: a fibre $\mathrm{Fl}^{f_*}(V)$ is of dimension $n(n+1)/2$ (but typically reducible) if and only if $\mathbf{r}(f_*)\geq\mathbf{r}^2$ for a certain explicit rank tuple $\mathbf{r}^2$. Our present methods are not strong enough to determine the set of supports of the extended family $\pi:\mathcal{F}_{U'}\rightarrow U'$, since the degree estimates of Section \ref{subsec:nonvan} do not generalize further, as an example for $n=6$ showed.


\item Due to the complicated nature of the Knight-Zelevinsky formula for the multi-segment duality, there seems to be no obvious intrinsic description, in terms of inequalities between the components $r_{i,j}$, for when a rank tuple $\mathbf{r}$ is Motzkin.

\item Since the multi-segment duality admits a geometric interpretation in terms of preprojective varieties for the quiver $\Omega$ by \cite{KZ}, there is, yet unexplored, potential for re-geometrization of our present methods for the proof of the main result.


\item With some effort, it can be proved that the multiplicity space $V^*(\mathbf{r})$, for the orbit $\mathcal{O}(\mathbf{r})$ of maximal codimension, is isomorphic to the cohomology of a point if $n$ is even, and to the cohomology of a projective line if $n$ is odd. We omit the proof here.

\end{itemize}




\section{Small rank examples}\label{sec:smallrank}

We provide explicit results when $n=2,3,4$. Recall that in these cases, the Motzkin numbers are $2,4,9$.


\subsection{The case $n=2$}\label{subsec:rank2}

In this case, let $\bb_1=b((1)')$ and $\bb_2=b((0)')$. Then
$$M=\bb_1+[3]_v!\bb_2,$$
consistent with the calculations in Section \ref{subsec:elex}.

\subsection{The case $n=3$}

In this case, let $\bb_1=b((1,1)')$, $\bb_2=b((1,0)')$, $\bb_3=b((0,1)')$ and $\bb_4=b((0,0)')$. Then

$$M=[2]_v!\bb_1+[3]_v!(\bb_2+\bb_3)+[4]_v!\bb_4.$$

Let $\mathbf{rk}_i$ be the rank tuple associated to $\bb_i$: (in the order $(r_{1,2},r_{1,3},r_{2,3})$)

$$\rk_1=(3,2,3),\ \ \rk_2=(3,3,4),\ \ \rk_3=(4,3,3),\ \ \rk_4=(4,4,4).$$

The orbit corresponding to all these rank tuples lie in the PBW locus, hence they belong to $U$. The number $4$ coincides with the Motzkin number.

\subsection{The case $n=4$}

In this case, let 
$$\bb_1=b((1,2,1)'),\ \ \bb_2=b((1,2,0)'),\ \ \bb_3=b((1,1,1)'),\ \ \bb_4=b((1,1,0)'),$$
$$\bb_5=b((1,0,1)'),\ \ \bb_6=b((1,0,0)'),\ \ \bb_7=b((0,2,1)'),\ \ \bb_8=b((0,2,0)'),$$ 
$$\bb_9=b((0,1,1)'),\ \ \bb_{10}=b((0,1,0)'),\ \ \bb_{11}=b((0,0,1)'),\ \ \bb_{12}=b((0,0,0)').$$

Then
$$M=\bb_1+[2]_v!\bb_2+[3]_v!\bb_3+[3]_v [3]_v[2]_v\bb_4+\bino{4}{2}_v\bb_5+[4]_v[3]_v[3]_v\bb_6+[2]_v!\bb_7+$$
$$+[2]_v!^2\bb_8+[3]_v [3]_v[2]_v\bb_9+[4]_v[3]_v[2]_v[2]_v\bb_{10}+[4]_v[3]_v[3]_v\bb_{11}+[5]_v!\bb_{12}.$$

Let $\mathbf{rk}_i$ be the rank tuple associated to $\bb_i$: (in the order $(r_{1,2},r_{1,3},r_{1,4},r_{2,3},r_{2,4},r_{3,4})$)
$$\mathbf{rk}_{1}=(4,3,2,4,3,4),\ \ \mathbf{rk}_{2}=(4,2,2,3,3,5),\ \ \mathbf{rk}_{3}=(4,4,3,5,4,4),$$
$$\mathbf{rk}_4=(4,3,3,4,4,5),\ \ \mathbf{rk}_5=(4,4,4,5,4,4),\ \ \mathbf{rk}_6=(4,4,4,5,5,5),$$
$$\mathbf{rk}_7=(5,3,2,3,2,4),\ \ \mathbf{rk}_8=(5,3,3,3,3,5),\ \ \mathbf{rk}_9=(5,4,3,4,3,4),$$
$$\mathbf{rk}_{10}=(5,4,4,4,4,5),\ \ \mathbf{rk}_{11}=(5,5,4,5,4,4),\ \ \mathbf{rk}_{12}=(5,5,5,5,5,5).$$

Among them, 
$$\mathbf{rk}_1,\mathbf{rk}_3,\mathbf{rk}_4,\mathbf{rk}_6,\mathbf{rk}_9,\mathbf{rk}_{10},\mathbf{rk}_{11},\mathbf{rk}_{12}$$
are exactly all rank tuples whose orbits belong to the PBW locus. The orbit corresponding to the tuple $\mathbf{rk}_5$ belongs to $U$, and the orbits corresponding to $\mathbf{rk}_2,\mathbf{rk}_7,\mathbf{rk}_{8}$ do not belong to $U$. There are thus $9$ rank tuples with corresponding orbits in $U$, parametrised by Motzkin paths.

\end{document}